\newcommand{\C}{\mathcal{C}}
\newcommand{\tuple}[1]{\vec{#1}}
\newcommand {\indep}[3] {#2 ~\bot_{#1}~ #3}
\newcommand {\indepc}[2] {#1 ~\bot~ #2}
\newcommand{\Dom}{\textrm{Dom}}
\newcommand{\Fr}{\textrm{Fr}}
\newcommand{\M}{\mathcal{M}}
\newcommand{\on}{\exists}
\newcommand{\ja}{\wedge}
\newcommand{\tai}{\vee}
\renewcommand{\a}{\alpha}
\def\dep{=\!\!}
\newcommand{\ESOfarity}[1]{{\rm ESO}_f({#1}\mbox{\rm-ary})}
\newcommand{\ESOfvar}[1]{{\rm ESO}_f({#1}\forall)}
\newcommand{\dforall}[1]{\FO(\dep(\ldots))({#1}\forall)}
\newcommand{\ddep}[1]{\FO(\dep(\ldots))({#1}\mbox{\rm-dep})}
\newcommand{\RAM}{{\rm RAM}}
\newcommand{\NTIME}{{\rm NTIME}}
\newcommand{\np}{{\rm NP}}
\newcommand{\PTIME}{{\rm PTIME}}
\newcommand{\LFP}{{\rm LFP}}
\newcommand{\sub}{\subseteq}
\newcommand{\ind}[1]{\FO (\bot_{\rm c})({#1}\mbox{\rm-ind})}
\newcommand{\inc}[1]{\FO (\subseteq)({#1}\mbox{\rm-inc})}
\newcommand{\incforall}[1]{\FO (\subseteq) ({#1}\forall)}
\newcommand{\indforall}[1]{\FO (\bot_{\rm c}) ({#1}\forall)}
\newcommand{\indNRforall}[1]{\FO (\bot) ({#1}\forall)}
\newcommand{\indincforall}[1]{\FO (\bot_{\rm c},\subseteq) ({#1}\forall)}
\newcommand{\indNRincforall}[1]{\FO (\bot,\subseteq) ({#1}\forall)}
\newcommand{\indlogic}{\FO (\bot_{\rm c})}
\newcommand{\inclogic}{\FO (\subseteq)}
\newcommand{\indNRlogic}{\FO (\bot)}
\newcommand{\indNRinclogic}{\FO (\bot,\subseteq)}
\newcommand{\FOCforall}[1]{\FO (\mathcal{C})({#1}\forall)}
\newcommand{\rel}{\textrm{rel}}
\newcommand{\atoms}{\dep(\ldots),\bot_{\rm c},\sub}
\newcommand{\Qf}{\textrm{V}}
\def\dep{=\!\!}
\newcommand{\FO}{{\rm FO}}
\newcommand{\mA}{{\mathfrak A}}
\newcommand{\ESO}{{\rm ESO}}
\newcommand{\PGFP}{{\rm GFP}^+}
\theoremstyle{plain}
\newtheorem{thm}[equation]{Theorem}
\newtheorem{lem}[equation]{Lemma}
\newtheorem{prop}[equation]{Proposition}
\newtheorem{cor}[equation]{Corollary}
\theoremstyle{plain}
\theoremstyle{definition}
\newtheorem{defi}[equation]{Definition}
\begin{document}
\author{Miika Hannula\thanks{Department of Mathematics and Statistics, University of Helsinki, Finland. \texttt{miika.hannula@helsinki.fi}} \and Juha Kontinen\thanks{Department of Mathematics and Statistics, University of Helsinki, Finland. \texttt{juha.kontinen@helsinki.fi}}}
\title{Hierarchies in independence and inclusion logic with strict semantics\thanks{Research supported by grant 264917 of the Academy of Finland}
}

\maketitle

\begin{abstract}
We study the expressive power of fragments of inclusion and independence logic  defined by restricting the number $k$ of universal quantifiers in formulas. Assuming the so-called strict semantics for these logics, we relate these fragments of inclusion and independence logic to  sublogics  $\ESOfvar{k}$ of existential second-order logic, which in turn are known to  capture the  complexity classes $\NTIME_{\RAM}(n^k)$. 
\end{abstract}

\section{Introduction}

In this article we study the expressive power of independence logic  $\indlogic$ \cite{gradel10} and inclusion logic $\inclogic$ \cite{galliani12} under the so-called strict semantics. These logics are variants of dependence logic \cite{vaananen07}  extending first-order logic by dependence atomic formulas 
\begin{equation}\label{da}\dep(x_1,\ldots,x_n)
\end{equation} the meaning of which is that the value of $x_n$ is functionally determined by the values of $x_1,\ldots, x_{n-1}$. 
Independence logic replaces the dependence atoms by independence atoms 
$\vec{y}\bot_{\vec{x}} \vec{z}$,
the intuitive meaning of which is that, with respect to any fixed value of  $ \vec x$, the variables $\vec y$  are independent of the variables $\vec z$.
In inclusion logic dependence atoms are replaced by inclusion atoms
$ \vec{x}\subseteq \vec{y},$
meaning that all the values of $\vec{x}$ appear also as values for $\vec{y}$. We study the expressive power of the syntactic fragments $\indforall{k}$ and $\incforall{k}$ of these logics defined by restricting the number of universal quantifiers in formulas.
We show that, under the strict semantics, the fragments $\indforall{k}$ give rise to an infinite expressivity hierarchy, while it is known that under the so-called lax semantics $\indforall{2} =  \indlogic$  \cite{galhankon13}. For inclusion logic a strict expressivity hierarchy follows from our result showing that
$$\incforall{k}=\NTIME_{\RAM}(n^k).$$

Since the introduction of dependence logic in 2007, the area of dependence logic, i.e.,  team semantics, has evolved into a general framework for logics in which various notions of dependence and independence can be formalized. Dependence logic is known to be equivalent in expressive power with existential second-order logic ($\ESO$) \cite{vaananen07}, but for most of the recent variants of dependence logic the correspondence to $\ESO$ does not hold. Furthermore, the expressive power of these variants is sensitive to the choice between the two versions of the team semantics called the strict and the lax semantics. We briefly mention some previous work on the complexity theoretic aspects of dependence logic and its variants:

\begin{itemize}
\item The extension of dependence logic by so-called intuitionistic implication $\rightarrow$ (introduced in \cite{abramsky09}) increases the expressive power of dependence logic to full second-order logic \cite{yang13}. 
\item The model checking problem of full dependence logic, and many of its variants, was recently shown to be NEXPTIME-complete. Furthermore, for any variant of dependence logic whose atoms are PTIME-computable, the corresponding model checking problem is contained in NEXPTIME  \cite{gradel12}. 
\item The non-classical interpretation of disjunction in dependence logic has the effect that the model checking problem of $\phi_1$ and $\phi_2$, where
\begin{itemize}
\item $\phi_1$ is the formula $\dep(x,y)\vee\dep(u,v)$, and
\item $\phi_2$ is  the formula $ \dep(x,y)\vee\dep(u,v)\vee \dep(u,v)$
\end{itemize}
 is already  NL-complete and NP-complete, respectively \cite{kontinenj13}. 
\item The Satisfiability problem for the two variable fragment of dependence logic was shown to be NEXPTIME-complete in \cite{KontinenKLV11}.
\item Under the lax semantics inclusion logic is equivalent to Positive Greatest Fixed Point Logic ($\PGFP$) and captures $\PTIME$ over finite (ordered) structures \cite{gallhella13}. On the other hand, under the strict semantics, inclusion logic is equivalent to $\ESO$ and hence captures $\np$ \cite{galhankon13}.
\end{itemize}

The starting point of this work are the results of \cite{durand11} and \cite{galhankon13} charting the expressive power of certain natural syntactic fragments of dependence logic and its variants defined using independence and inclusion atoms (See Definition \ref{fragments} for the exact definitions). For a set $\mathcal{C}$ of atoms, we  denote by $\FOCforall{k}$  the sentences of $\FO(\mathcal{C})$ in which at most $k$ variables have been universally quantified, and $\ddep{k}$ denotes those dependence logic sentences in which  dependence atoms of arity at most $k$ may appear (atoms of the form $\dep(x_1,\ldots,x_n)$ satisfying $n\le k+1$). 
The following results were shown in  \cite{durand11}:
\begin{enumerate}
\item \label{arityH}  $\ddep{k}= \ESOfarity{k}$,
\item\label{forallH} $\dforall{k} \le \ESOfvar{k}\le  \dforall{2k}$,
\end{enumerate}
where $\ESOfarity{k}$ is  the fragment  of $\ESO$ in which the quantified functions and relations have arity at most $k$, and 
$\ESOfvar{k}$  consists of $\ESO$-sentences that are in Skolem Normal Form and contain at most  $k$ universal first-order quantifiers. 
Note that \ref{forallH} implies an infinite expressivity hierarchy for the fragments  $\dforall{k}$ by the fact that $\ESOfvar{k} =\NTIME_{\RAM}(n^k)$  \cite{grandjean04}. 

Dependence logic formulas have  the so-called downward closure property which renders the strict and the lax semantics equivalent for dependence logic formulas. The formulas of inclusion logic and independence logic do not have the downward closure property, and hence the two semantics have to be treated separately. In  \cite{galhankon13}, the focus was on fragments of these logics  under the lax semantics. Below the fragments $\ind{k}$ and $\inc{k}$  are defined to contain only those sentences in which independence atoms with at most $k+1$ different variables, and inclusion atoms $\vec{a}\subseteq \vec{b}$ satisfying $|\vec{a}|=|\vec{b}|\leq k$, may appear.
 
\begin{enumerate}[(i)]
\item $ \inc{k}< \ESOfarity{k}=\ind{k},$ 
\item  $\indNRforall{2} =  \indNRlogic$, 
\item  $\indNRincforall{1} = \indNRinclogic$,
\end{enumerate}
where  $\indNRlogic$  is the sublogic of independence logic allowing only so-called pure independence atoms  $\vec{y}\bot \vec{z}$. It is known that  $\indNRlogic = \indlogic$ \cite{vaananen13}. In this article we consider the fragments $\incforall{k}$ and $\indforall{k}$ under the strict semantics. Our findings are comparable to the results of \cite{durand11} (see \ref{forallH}), but the method of proof is different:
\begin{enumerate}
\item\label{main} $ \incforall{k}=\ESOfvar{k}=\NTIME_{\RAM}(n^k)$, 
\item  $\indforall{k} \le \ESOfvar{(k+1)}$,
\item\label{lowerbound} $\ESOfvar{k}\le \indforall{2k} $.
\end{enumerate}
Our results imply an infinite (strict) expressivity hierarchy for the logics $\indforall{k}$ ($\incforall{k}$).

This article is organized as follows. In Section 2 we  review some basic properties and results regarding dependence logic and its variants. In Section 3 we prove a normal form theorem for logics defined using dependence, inclusion, and independence atoms. The main results of the paper are then proved in Section 4. 

\section{Preliminaries}

\subsection{Team Semantics}
In this section we define the essentials of the team semantics of dependence logic. In this paper we consider only formulas in negation normal form, and structures with at least two elements.  The notion of a team is made precise in the next definition.

\begin{defi}

Let $\M$ be a structure with domain $M$, and $V$  a finite set of variables. Then
\begin{itemize}
\item A \emph{team} $X$ over $\M$ with domain $\Dom(X) = V$ is a finite set of assignments from $V$ into  $M$.
\item For a tuple $\tuple v=( v_1,\ldots,v_n)$ of variables from $V$, we denote by $X(\tuple v)$ the $n$-ary relation  $\{s(\tuple v) : s \in X\}$ of $M$, where $s(\tuple v):=( s(v_1),\ldots,s(v_n))$.
\item For a subset $W$ of $V$, we denote by $X \upharpoonright W$  the team obtained by restricting all assignments of $X$ to  $W$.
\item  The set of free variables of a formula $\phi$ is defined analogously as in first-order logic, and is denoted by $\Fr(\phi)$.
\end{itemize}
\end{defi}

We are now ready to define the semantics of dependence logic. We will first define the strict  team semantics and then discuss the ways in which the lax semantics differs from it. We will first define satisfaction for the connectives, quantifiers, and first-order atoms (i.e., first-order formulas). Below  $\M \models_s \alpha$ refers to satisfaction in first-order logic.

\begin{defi}[Strict Semantics]
Let $\M$ be a structure, $X$ and team over $M$, and $\phi$ a formula such that $\Fr(\phi)\subseteq \Dom(X)$. Then $X$ \emph{satisfies} $\phi$ in $\M$, $\M \models_X \phi$, if 
\begin{description}
\item[lit:] For a first-order literal $\alpha$, $\M \models_X \alpha$ if and only if, for all $s \in X$, $\M \models_s \alpha$.
\item[$\vee$:]  $\M \models_X \psi \vee \theta$ if and only if,  there are $Y$ and $Z$ such that $Y \cup Z=X$, $Y\cap Z = \emptyset$, $\M \models_Y \psi$ and $\M \models_Z \theta$.
\item[$\wedge$:] $\M \models_X \psi \wedge \theta$ if and only if, $\M \models_X \psi$ and $\M \models_X \theta$.
\item[$\exists$:]  $\M \models_X \exists v \psi$ if and only if, there exists a function $F : X \rightarrow M$ such that $\M \models_{X[F/v]} \psi$, where $X[F/v] = \{s[F(s)/v] : s \in X\}$.
\item[$\forall$:] $\M \models_X \forall v \psi$ if and only if, $\M \models_{X[M/v]} \psi$, where $X[M/v] = \{s[m/v] : s \in X, m \in M\}$.
\end{description}
A sentence $\phi$ is  said to be \emph{true} in $\M$ (abbreviated $\M \models \phi$) if $\M \models_{\{\emptyset\}} \phi$. Sentences $\phi$ and $\phi'$ are said be equivalent, written $\phi \equiv \phi$, if for all models $\M$, $\M \models \phi \Leftrightarrow \M \models \phi'$.
\end{defi}
In the lax semantics, the semantic rule for disjunction is modified by removing the requirement $Y\cap Z = \emptyset$, and  the clause for the existential quantifier  is replaced by 
\begin{description} 
\item $\M \models_X \exists v \psi$ if and only if, there exists a function $H : X \rightarrow \mathcal P(M) \backslash \{\emptyset\}$ such that $\M \models_{X[H/v]} \psi$, where $X[H/v] = \{s[m/v] : s \in X, m \in H(s)\}$.
\end{description}
The meaning of first-order formulas is invariant under the choice between the two semantics. Furthermore, first-order formulas have  the following strong  Flatness property.
\begin{thm}[Flatness]\label{flatness}Let $\M$ be a structure and $X$ a team of $\M$. Then for a  first order formula $\phi$ the following are equivalent: 
\begin{enumerate}
\item $\M \models_X \phi$, 
\item For all $s \in X$, $\M \models_s \phi$. 
\end{enumerate}
\end{thm}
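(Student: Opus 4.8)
The plan is a straightforward induction on the structure of the first-order formula $\phi$, which we take to be in negation normal form, proving the equivalence of (1) and (2) simultaneously. The base case $\phi=\alpha$ (a first-order literal) is nothing but clause \textbf{lit} of the strict semantics, which asserts exactly that $\M\models_X\alpha$ iff $\M\models_s\alpha$ for all $s\in X$. For the inductive step I would split into the connective and quantifier cases.

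The cases of conjunction and the universal quantifier are immediate from the corresponding semantic clauses together with the induction hypothesis: for $\psi\wedge\theta$ one uses that $\M\models_X\psi\wedge\theta$ iff $\M\models_X\psi$ and $\M\models_X\theta$; for $\forall v\,\psi$ one uses $\M\models_X\forall v\,\psi$ iff $\M\models_{X[M/v]}\psi$, and unwinds the definition of $X[M/v]$ to recover ``$\M\models_s\forall v\,\psi$ for every $s\in X$''. The existential case is handled by the obvious back-and-forth: in one direction a choice function $F\colon X\to M$ witnessing $\M\models_X\exists v\,\psi$ gives, via the induction hypothesis, $\M\models_{s[F(s)/v]}\psi$ and hence $\M\models_s\exists v\,\psi$ for each $s\in X$; in the other direction one picks, for each $s\in X$, an element $m_s$ witnessing $\M\models_s\exists v\,\psi$ in first-order logic and sets $F(s):=m_s$. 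Here the only point to notice is that every element of $X[F/v]$ has the form $s[F(s)/v]$ for some $s\in X$, which is what makes the induction hypothesis applicable.

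The one case deserving a comment is disjunction. In the forward direction a witnessing decomposition $X=Y\cup Z$ with $\M\models_Y\psi$ and $\M\models_Z\theta$ yields, by the induction hypothesis, $\M\models_s\psi\vee\theta$ for every $s\in X$. In the backward direction one must manufacture such a decomposition from the pointwise assumption; the natural choice $Y:=\{s\in X:\M\models_s\psi\}$ and $Z:=X\setminus Y$ works, and crucially it is a \emph{disjoint} partition, so the argument is valid under the strict semantics. There is essentially no obstacle: the empty team and the singleton team $\{\emptyset\}$ need no special treatment, since the quantifiers over $s\in X$ then become vacuous or trivial. Finally, since the only adjustments needed for the lax semantics are to drop the disjointness requirement in the disjunction case and to take the singleton supplementing functions $H(s):=\{m_s\}$ in the existential case, the very same proof shows that the truth of a first-order formula in a team is insensitive to the choice between the two semantics, as claimed above.
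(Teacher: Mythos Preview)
Your argument is correct: the induction on the structure of $\phi$ (in negation normal form) goes through exactly as you describe, and your handling of the disjunction case---taking $Y=\{s\in X:\M\models_s\psi\}$ and $Z=X\setminus Y$ so that the split is genuinely disjoint---is precisely what is needed for the strict semantics. The paper itself does not supply a proof of this theorem; it is quoted as a standard background fact from the team-semantics literature, and the inductive proof you give is the canonical one.
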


\subsection{Dependencies in Team Semantics}
For the purposes of this paper, the following atoms are considered:
\begin{defi}
\begin{itemize}
\item Let $\tuple x$ be a tuple of variables and let $y$ be another variable. Then $=\!\!(\tuple x, y)$ is a \emph{dependence atom}, with the semantic rule
\begin{description}
\item $\M \models_X \dep(\tuple x, y)$ if and only if for all $s, s' \in X$, if $s(\tuple x)=s'(\tuple x)$, then $s(y)=s'(y)$;
\end{description}
\item Let $\tuple x$, $\tuple y$, and $\tuple z$ be tuples of variables (not necessarily of the same length). Then $\indep{\tuple x}{\tuple y}{\tuple z}$ is a \emph{conditional independence atom}, with the semantic rule
\begin{description}
\item $\M \models_X \indep{\tuple x}{\tuple y}{\tuple z}$ if and only if for all $s, s' \in X$ such that $s(\tuple x)=s'(\tuple x)$, there exists a $s'' \in X$ such that $s''(\tuple x\tuple y \tuple z)=s(\tuple x\tuple y)s'(\tuple z)$.
\end{description}
Furthermore, we will write $\indepc{\tuple x}{\tuple y}$ as a shorthand for $\indep{\emptyset}{\tuple x}{\tuple y}$, and  call it a \emph{pure independence atom};
\item Let $\tuple x$ and $\tuple y$ be two tuples of variables of the same length. Then $\tuple x \subseteq \tuple y$ 
is an \emph{inclusion atom}, with the semantic rule 
\begin{description}
\item $\M \models_X \tuple x \subseteq \tuple y$ if and only if $X(\tuple x) \subseteq X(\tuple y)$; 
\end{description}
\end{itemize}
\end{defi}

Given a collection $\mathcal C \subseteq \{=\!\!(\ldots), \bot_{\rm c}, \subseteq\}$ of atoms, we will write $\FO(\mathcal C)$ (omitting the set parenthesis of $\mathcal{C}$) for the logic obtained by adding them to the language of first-order logic. With this notation dependence logic, independence logic and inclusion logic are denoted by $\FO (\dep(\ldots))$, $\indlogic$ and $\FO(\subseteq)$, respectively. 
We will also write $\indNRlogic$ for the fragment of independence logic containing only pure independence atoms. It is worth noting that the interpretation of the  atoms is the same in both the  strict and the lax semantics. 

The following proposition formalizes the basic relationship between the two semantics:
\begin{prop}\label{strict-lax}\cite{galliani12}
If $\M \models_X \phi$ in the  strict semantics, then $\M \models_X \phi$ in the lax semantics.
\end{prop}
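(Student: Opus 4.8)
The plan is to proceed by structural induction on $\phi$, exploiting the fact that the strict and lax semantics differ only in the clauses for disjunction and for the existential quantifier, and that in each of these two cases a strict witness is a special case of a lax witness.

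For the base cases, first-order literals receive the identical clause under both semantics, so there is nothing to prove. For the dependence, conditional independence, and inclusion atoms, the semantic rules make no reference either to the splitting of teams or to the quantifier rules, and as remarked after their definition the interpretation of these atoms is the same in both semantics; hence strict and lax satisfaction coincide on atomic formulas.

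For the inductive step, the cases of $\wedge$ and of $\forall v$ are immediate, since both clauses pass to exactly the same subteam(s) — $X$ itself for each conjunct, and $X[M/v]$ for the universal quantifier — so one applies the induction hypothesis directly. For $\phi = \exists v\,\psi$, suppose $\M \models_X \phi$ in the strict semantics, witnessed by a function $F : X \to M$ with $\M \models_{X[F/v]} \psi$ strictly. Define $H : X \to \mathcal{P}(M)\setminus\{\emptyset\}$ by $H(s) = \{F(s)\}$; then $X[H/v] = X[F/v]$, and by the induction hypothesis $\M \models_{X[F/v]} \psi$ in the lax semantics, so $H$ witnesses $\M \models_X \exists v\,\psi$ laxly. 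For $\phi = \psi \vee \theta$, a strict witness is a pair $Y,Z$ with $Y \cup Z = X$, $Y \cap Z = \emptyset$, $\M \models_Y \psi$ and $\M \models_Z \theta$ strictly; discarding the now-superfluous disjointness condition, the same pair $Y,Z$ witnesses lax satisfaction of $\psi \vee \theta$ once the induction hypothesis is invoked on $\psi$ and on $\theta$.

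There is essentially no serious obstacle: the argument is a routine structural induction. The only point requiring (minimal) care is to observe that the two places where the semantics genuinely diverge — the splitting rule for $\vee$ and the choice of a single value versus a nonempty set of values for $\exists$ — are precisely arranged so that every strict witness embeds canonically into a lax one (a function viewed as a singleton-valued multifunction; a disjoint split viewed as an arbitrary split), so that the induction proceeds clause by clause with no global argument needed.
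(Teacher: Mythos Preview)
Your structural induction is correct and is the standard argument for this fact. Note, however, that the paper does not actually supply its own proof of this proposition: it is stated with a citation to \cite{galliani12} and left unproved in the text, so there is no in-paper proof to compare against. Your write-up is exactly the expected one and would serve as a self-contained justification.
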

All formulas of  the above-mentioned logics satisfy the following property (with respect to  both semantics):
\begin{prop}[Empty Team Property]
\label{thm:etp}
For all formulas $\phi \in \FO(=\!\!(\ldots), \bot_{\rm c}, \subseteq)$ and  all structures $\M$, $\M \models_\emptyset \phi$. 
\end{prop}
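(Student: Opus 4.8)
The plan is a routine structural induction on $\phi$, exploiting that over the empty team $\emptyset$ any statement universally quantifying over assignments is vacuously true, and that $\emptyset(\tuple x)$ is the empty relation for every tuple $\tuple x$.

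\textbf{Base cases.} If $\phi$ is a first-order literal $\alpha$, then $\M \models_\emptyset \alpha$ holds vacuously, since the clause \textbf{lit} quantifies over $s \in \emptyset$. The same vacuous reasoning settles the dependence atom $\dep(\tuple x, y)$ (there is no pair $s, s' \in \emptyset$) and the conditional independence atom $\indep{\tuple x}{\tuple y}{\tuple z}$ (again no such pair). For the inclusion atom $\tuple x \subseteq \tuple y$ one observes $\emptyset(\tuple x) = \emptyset = \emptyset(\tuple y)$, hence $\emptyset(\tuple x) \subseteq \emptyset(\tuple y)$.

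\textbf{Inductive step.} Assume the claim for all proper subformulas. For $\psi \wedge \theta$, apply the induction hypothesis to both conjuncts. For $\psi \vee \theta$, write $\emptyset = \emptyset \cup \emptyset$ with $\emptyset \cap \emptyset = \emptyset$ and invoke the induction hypothesis on each disjunct; this works verbatim in both semantics, since the trivial split also satisfies the disjointness requirement of the strict semantics. For $\exists v\, \psi$, take $F$ (respectively $H$, in the lax case) to be the empty function; then $\emptyset[F/v] = \emptyset$ (respectively $\emptyset[H/v] = \emptyset$), so the induction hypothesis yields $\M \models_\emptyset \psi$ --- and the non-emptiness requirement on the values of $H$ is vacuously met because $H$ has empty domain. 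For $\forall v\, \psi$, observe $\emptyset[M/v] = \{ s[m/v] : s \in \emptyset,\ m \in M \} = \emptyset$, and again apply the induction hypothesis.

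There is essentially no obstacle here. The only point that warrants a moment's care is checking that the empty witnesses (the trivial split, the empty function) meet the side conditions in the semantic clauses, in particular the non-emptiness condition in the lax rule for $\exists$, which is not violated precisely because the quantifier ranges over an empty team. The argument is entirely uniform across all $\mathcal C \subseteq \{=\!\!(\ldots), \bot_{\rm c}, \subseteq\}$ and for both the strict and the lax semantics.
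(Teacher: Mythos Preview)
Your proof is correct. The paper itself states this proposition without proof, treating it as a standard property of team semantics; your structural induction is exactly the routine argument one would give.
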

Furthermore, a fundamental property of all dependence logic formulas is Downward Closure:
\begin{prop}[Downwards Closure]
\label{thm:dc}
For all dependence logic formulas $\phi$ and all  $\M$  and $X$, if $\M \models_X \phi$ then $\M \models_Y \phi$ for all $Y \subseteq X$.
\end{prop}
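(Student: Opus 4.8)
The plan is to argue by induction on the structure of the dependence logic formula $\phi$. The first observation is that, since $\phi \in \FO(\dep(\ldots))$, the only atomic subformulas of $\phi$ are first-order literals and dependence atoms $\dep(\tuple x, y)$; inclusion and independence atoms do not occur, and this restriction is exactly what makes the statement true, since those atoms are not downward closed in general. So it suffices to treat these two kinds of atoms in the base case and the connectives and quantifiers of the strict semantics in the inductive step (the argument will be seen to apply equally to the lax semantics).

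For the base cases: if $\phi$ is a first-order literal $\alpha$, then by Flatness (Theorem~\ref{flatness}) $\M \models_X \alpha$ is equivalent to $\M \models_s \alpha$ for every $s \in X$; as $Y \subseteq X$ this holds in particular for every $s \in Y$, so $\M \models_Y \alpha$. If $\phi$ is a dependence atom $\dep(\tuple x, y)$, its semantic rule is a condition universally quantified over pairs of assignments of the team, and any such condition is inherited by an arbitrary subteam $Y \subseteq X$.

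For the inductive step, the cases of conjunction and the universal quantifier are immediate from the induction hypothesis, using $Y[M/v] \subseteq X[M/v]$ in the latter. For the existential quantifier, if $F \colon X \to M$ witnesses $\M \models_X \exists v\,\psi$, then $Y[(F\upharpoonright Y)/v] \subseteq X[F/v]$, so the induction hypothesis applied to this subteam gives that $F \upharpoonright Y$ witnesses $\M \models_Y \exists v\,\psi$. The one case needing a moment's care is disjunction: given a split $X = X_1 \cup X_2$ with $X_1 \cap X_2 = \emptyset$ witnessing $\M \models_X \psi \vee \theta$, put $Y_1 := X_1 \cap Y$ and $Y_2 := Y \setminus Y_1$. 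Then $Y_1 \cup Y_2 = Y$, $Y_1 \cap Y_2 = \emptyset$, and $Y_2 = Y \setminus X_1 \subseteq X \setminus X_1 \subseteq X_2$ since $X_1 \cup X_2 = X$; as $Y_1 \subseteq X_1$ and $Y_2 \subseteq X_2$, the induction hypothesis yields $\M \models_{Y_1} \psi$ and $\M \models_{Y_2} \theta$, hence $\M \models_Y \psi \vee \theta$.

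There is no genuine obstacle: the statement is a routine structural induction. The argument goes through for the lax semantics as well — for the lax disjunction rule the disjointness bookkeeping in the $\vee$-case is simply dropped, and the lax existential rule is handled by restricting the set-valued function $H$ to $Y$. The only points worth flagging are the use of Flatness in the literal case and the fact that the proof relies essentially on the syntax containing no inclusion or independence atoms.
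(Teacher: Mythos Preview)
Your proof is correct; the structural induction on the formula is the standard argument for this result, and each case is handled properly (the disjunction case could even be simplified to $Y_i := X_i \cap Y$, but your version works). Note, however, that the paper does not actually prove Proposition~\ref{thm:dc}: it is simply stated there as a known fundamental property of dependence logic, so there is no paper proof to compare against.
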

Downward closure is enough to render the two semantics equivalent: 
\begin{prop}\cite{galliani12}
For all dependence logic formulas $\phi$, models $\M$ and teams $X$, $\M \models_X \phi$ holds under the  strict interpretation if and only if it holds under the  lax interpretation.
\end{prop}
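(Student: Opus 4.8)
The plan is to establish the two directions separately. The implication from strict to lax satisfaction is precisely Proposition~\ref{strict-lax}, so the work is to prove the converse: for every dependence logic formula $\phi$, every structure $\M$ and every team $X$, if $\M\models_X\phi$ holds under the lax semantics then it holds under the strict semantics. I would prove this by induction on the structure of $\phi$. The base cases --- first-order literals and dependence atoms $\dep(\tuple x,y)$ --- are immediate, since the satisfaction clause for literals and the semantic rule for the dependence atom are identical in the two semantics. The semantic rules for conjunction and for the universal quantifier also do not differ between the two semantics, so in these cases the claim follows directly by applying the induction hypothesis to the immediate subformulas.

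The two cases that require work are disjunction and the existential quantifier, and here the main tool is Downward Closure (Proposition~\ref{thm:dc}). Suppose $\M\models_X\psi\vee\theta$ in the lax semantics, witnessed by teams $Y$ and $Z$ with $Y\cup Z=X$, $\M\models_Y\psi$ and $\M\models_Z\theta$. Setting $Z':=Z\setminus Y$ gives $Y\cup Z'=X$ and $Y\cap Z'=\emptyset$, while $Z'\subseteq Z$ yields $\M\models_{Z'}\theta$ by downward closure; the induction hypothesis then upgrades $\M\models_Y\psi$ and $\M\models_{Z'}\theta$ to strict satisfaction, so $Y$ and $Z'$ witness $\M\models_X\psi\vee\theta$ strictly. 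For $\phi=\exists v\,\psi$, assume $\M\models_X\exists v\,\psi$ in the lax semantics via some $H:X\to\Po(M)\setminus\{\emptyset\}$. Choosing, for each $s\in X$, an element $F(s)\in H(s)$ (possible since each $H(s)$ is non-empty) defines a function $F:X\to M$ with $X[F/v]\subseteq X[H/v]$; downward closure then gives $\M\models_{X[F/v]}\psi$, the induction hypothesis makes this strict, and hence $\M\models_X\exists v\,\psi$ holds strictly.

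The one thing that has to be recognized --- and essentially the only obstacle --- is that Downward Closure is exactly what compensates for the gap between the two semantics: passing from a lax witness to a strict one replaces an overlapping split by a disjoint one, and a multi-valued choice function by a single-valued one, and both moves merely shrink the teams involved, so downward closure guarantees that satisfaction is preserved. Once this is seen, the induction is entirely routine.
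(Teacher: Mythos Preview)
Your argument is correct and is precisely the standard proof: downward closure is what lets you shrink a lax witness (an overlapping split, or a set-valued choice function) to a strict one. Note, however, that the paper does not actually supply a proof of this proposition --- it is stated with a citation to \cite{galliani12}, preceded only by the remark that ``Downward closure is enough to render the two semantics equivalent.'' Your proof is exactly the argument that remark is alluding to, so there is nothing to compare: you have filled in the expected details.
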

The following  useful  and natural Locality property holds generally only with respect to the lax semantics:
\begin{prop}[Locality]\cite{galliani12}
\label{thm:loc}
Let $\phi$ be a formula of $\FO(=\!\!(\ldots), \bot_{\rm c}, \subseteq)$ whose free variables $\Fr (\phi)$ are contained in $V$. Then, for all models $\M$ and teams $X$,  $\M \models_X \phi$ if and only if $\M \models_{X\upharpoonright V} \phi$, under the lax semantics.
\end{prop}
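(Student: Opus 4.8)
The plan is to prove the statement by induction on the structure of $\phi$, establishing the slightly more flexible claim that whenever $\Fr(\phi) \subseteq V \subseteq \Dom(X)$ we have $\M \models_X \phi$ if and only if $\M \models_{X \upharpoonright V} \phi$ under the lax semantics. The extra freedom in the choice of $V$ is needed so that the induction hypothesis can be applied to the subformula in the quantifier cases, where one passes from $V$ to $V \cup \{v\}$. Throughout we may assume, by renaming bound variables, that no bound variable of $\phi$ occurs in $\Dom(X)$.

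The atomic cases are immediate. For a first-order literal the equivalence is Flatness (Theorem \ref{flatness}) together with the fact that the literal mentions only variables of $V$. For a dependence, conditional independence, or inclusion atom, the truth value in $X$ depends only on the relations $X(\vec u)$ for tuples $\vec u$ of variables occurring in the atom; all such variables lie in $\Fr(\phi) \subseteq V$, and $X(\vec u) = (X \upharpoonright V)(\vec u)$. The conjunction case is a direct application of the induction hypothesis to both conjuncts, each having its free variables contained in $V$. For a disjunction $\psi \vee \theta$: in the forward direction, if $Y \cup Z = X$ witnesses $\M \models_X \psi \vee \theta$, then $Y \upharpoonright V$ and $Z \upharpoonright V$ witness $\M \models_{X \upharpoonright V} \psi \vee \theta$ by the induction hypothesis; conversely, given a split $Y' \cup Z' = X \upharpoonright V$, the preimages $Y = \{ s \in X : s \upharpoonright V \in Y'\}$ and $Z = \{ s \in X : s \upharpoonright V \in Z' \}$ satisfy $Y \cup Z = X$, $Y \upharpoonright V = Y'$ and $Z \upharpoonright V = Z'$, so the induction hypothesis again applies. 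It is essential here that the lax rule for $\vee$ imposes no disjointness condition.

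The universal case is routine: since $v \notin \Dom(X)$ one checks directly that $X[M/v] \upharpoonright (V \cup \{v\}) = (X \upharpoonright V)[M/v]$ and applies the induction hypothesis to $\psi$ with the set $V \cup \{v\}$. The existential case $\exists v\,\psi$ is the crux and the only place the lax semantics is genuinely used. Given a witness $H : X \rightarrow \Po(M) \setminus \{\emptyset\}$ for $\M \models_X \exists v\,\psi$, define $H' : X \upharpoonright V \rightarrow \Po(M) \setminus \{\emptyset\}$ by $H'(t) = \bigcup \{ H(s) : s \in X,\ s \upharpoonright V = t \}$; one verifies $X[H/v] \upharpoonright (V \cup \{v\}) = (X \upharpoonright V)[H'/v]$ and that $H'(t)$ is nonempty for each $t \in X \upharpoonright V$, and concludes via the induction hypothesis for $\psi$ with $V \cup \{v\}$. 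Conversely, from a witness $H' : X \upharpoonright V \rightarrow \Po(M) \setminus \{\emptyset\}$ one lifts to $H(s) := H'(s \upharpoonright V)$, for which the same team identity holds. This step is also the main obstacle, and it explains why locality fails under the strict semantics: distinct assignments of $X$ may be identified in $X \upharpoonright V$, so the only uniform way to combine their $H$-values is to take a \emph{union}, which is a legitimate choice only because the lax existential rule supplies nonempty subsets of $M$ rather than single elements. Once this is in place, the remaining verifications of the displayed team identities are routine.
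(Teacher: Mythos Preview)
Your proof is correct and is the standard inductive argument for locality under lax semantics. Note, however, that the paper does not actually prove this proposition: it is stated with a citation to \cite{galliani12} and used as a black box. So there is no ``paper's own proof'' to compare against; you have supplied what the paper omits, and your argument matches the one in Galliani's original paper. The one step you leave implicit---that bound variables may be assumed disjoint from $\Dom(X)$ via $\alpha$-renaming---is itself a small lemma requiring its own induction, but it is routine and standard enough that your appeal to it is reasonable.
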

The failure of locality with respect to the strict semantics makes, e.g., the transformation of formulas into prenex normal form highly non-trivial (See Section 3). 

\subsection{Expressive power and complexity}
We elaborate on some of the results and definitions mentioned in the introduction.

The expressive power of  dependence logic coincides with that of existential second-order logic. In fact the following result holds with respect to both the  strict and the lax semantics:
\[ \ESO= \FO(=\!\!(\ldots), \bot_{\rm c}, \subseteq)= \FO(=\!\!(\ldots))=\indNRlogic .   \]
On the other hand, the expressive power of inclusion logic is sensitive to the choice of the semantics:
\[  \FO( \subseteq)= \ESO   \]
under the strict semantics, and  $\FO( \subseteq)= \PGFP$ (over finite structures   $\FO( \subseteq)= \LFP$)  with respect to the lax semantics. In this article, we take a closer look at the expressive power of the fragments  $\incforall{k}$ and  $\indforall{k}$ under the  strict semantics. As in  \cite{durand11}, we relate  these fragments of inclusion and independence logic to the fragments  $\ESOfvar{k}$ of $\ESO$. Recall that  $\ESOfvar{k}$ contains the sentences of $\ESO$  in Skolem Normal Form
\begin{equation*}
\exists f_1\ldots\exists f_n \forall x_1\ldots \forall x_r \psi,
\end{equation*}
where $r\le k$, and $\psi$ is a quantifier-free formula. It was shown in  \cite{grandjean04}  that
\[ \ESOfvar{k}=\NTIME_{\RAM}(n^k), \]
where $\NTIME_{\RAM}(n^k)$ denotes the family  of classes of structures that can be recognized by a non-deterministic RAM in time $O(n^k)$. By the result of  \cite{cook72},
\[ \NTIME_{\RAM}(n^k)< \NTIME_{\RAM}(n^{k+1})\]
hence relating our logics to these classes gives us a method to show the existence of  expressivity hierarchies.

We end this section by defining the syntactic fragments of logics relevant for this article.
\begin{defi}\label{fragments} Let $\mathcal{C}$ be a subset of $\{\dep(\ldots),\bot_{\rm c},\bot,\subseteq\}$ and let $k \in \mathbb{N}$. Then
\begin{enumerate}
\item $\FO(\mathcal{C}) (k-$dep$)$ is the class of sentences of $\FO(\mathcal{C})$ in which dependence atoms of the form $\dep(\vec{z},y)$, where $\vec{z}$ is of length at most $k$, may appear.
\item $\FO(\mathcal{C}) (k-$ind$)$ is the class of sentences of $\FO(\mathcal{C})$ in which independence atoms of the form $\vec{y}\bot_{\vec{x}} \vec{z}$, where  $\vec{x}\vec{y}\vec{z}$  has at most $k+1$ distinct variables, may appear.
\item $\FO(\mathcal{C}) (k-$inc$)$ is the class of sentences of $\FO(\mathcal{C})$ in which inclusion atoms of the form $\vec{a} \subseteq \vec{b}$, where $\vec{a}$ and $\vec{b}$ are of length at most $k$, may appear.
\item $\FO(\mathcal{C}) (k\forall)$ is the class of sentences of $\FO(\mathcal{C})$ in which every variable is quantified exactly once and at most $k$ universal quantifiers occur. 
\end{enumerate}
\end{defi}

\section{A prenex normal form theorem}
In this section we fix $\C \sub \{\atoms\}$, and present a prenex normal form translation for $\FO(\C)$ sentences. 
We will prove the following normal form theorem.
\begin{thm}\label{NNF}
Let $\phi \in \FO(\mathcal{C})(k\forall)$. Then there is a $\phi'\in \FO(\dep(\ldots ),\mathcal{C})(k\forall)$ which is logically equivalent to $\phi$ and of the form
$$\forall x_1 \ldots \forall x_m \on x_{m+1} \ldots \on x_{m+n} (\chi \ja \theta)$$
where $m \leq k$, $\chi$ is a conjunction of $\{\dep(\ldots ),\mathcal{C}\}$-atoms and $\theta$ is a first-order formula.
\end{thm}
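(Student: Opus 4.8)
The plan is to transform an arbitrary $\phi \in \FO(\mathcal{C})(k\forall)$ into the required prenex form by a sequence of equivalence-preserving rewriting steps, each of which is careful not to increase the number of universal quantifiers and not to introduce new atoms outside $\{\dep(\ldots),\mathcal{C}\}$. The first step is to handle the fact that, under strict semantics, locality fails, so the usual first-order prenex manipulations (pulling quantifiers out over $\wedge$ and $\vee$) are not immediately available. The standard device here is to rename bound variables so that every variable is quantified exactly once (as required by the definition of $(k\forall)$ anyway) and then to push quantifiers outward. The delicate cases are pulling $\exists$ and $\forall$ out of a disjunction $\psi \vee \theta$: because the team is split non-trivially in the strict semantics, naively moving a quantifier across $\vee$ changes meaning. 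The trick, following the treatment of prenex forms for dependence-type logics, is to introduce a fresh variable that records which side of the disjunction an assignment went to, replacing $\psi(\vec y) \vee \theta(\vec z)$ by something like $\exists u\,\bigl((u = c_0 \wedge \psi) \vee (u = c_1 \wedge \theta)\bigr)$ after first quantifying two constants, and then using dependence atoms to simulate the disjoint splitting. Since $\exists$ costs no universal quantifier, this is safe for the $k\forall$ bound.

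Second, I would pull all the existential quantifiers to the front past conjunctions and disjunctions (using the fresh-variable bookkeeping above for disjunctions), and pull the universal quantifiers to the front as well, using the fact that $\forall$ distributes over $\wedge$ and, for $\vee$, again using an auxiliary encoding. A key point is the quantifier-order issue: when we have $\forall x \cdots \exists y \cdots \forall z$, we must move the inner $\forall z$ outward past the $\exists y$, which in general is not a valid first-order equivalence. The resolution, exactly as in the analogous dependence logic normal form arguments, is that moving $\forall z$ out past $\exists y$ is permitted provided we remember that $y$ may depend on $z$; we replace $y$ by a new variable and attach a dependence atom $\dep(\vec w, y)$ recording exactly the variables $y$ is allowed to depend on. This is why the target logic is $\FO(\dep(\ldots),\mathcal{C})$ and not just $\FO(\mathcal{C})$ — the dependence atoms are exactly the price of reordering the quantifier prefix. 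Because we only ever quantify the same universal variables (just reordered, never duplicated) and add existentials plus dependence atoms, the count of universal quantifiers stays $\le k$.

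Third, once the prefix is $\forall x_1 \cdots \forall x_m \exists x_{m+1} \cdots \exists x_{m+n}$ with a quantifier-free matrix built from first-order literals and $\{\dep(\ldots),\mathcal{C}\}$-atoms combined by $\wedge$ and $\vee$, I would massage the matrix into the form $\chi \wedge \theta$ where $\chi$ is a conjunction of atoms and $\theta$ is purely first-order. The idea is that every non-first-order atom appearing in the matrix can be "lifted out" to the top level as a conjunct, at the cost of introducing fresh existentially quantified variables that carry the relevant tuples and extra dependence/first-order conditions controlling when the atom is actually active (mirroring the disjunction-side bookkeeping: an atom sitting under a disjunct only needs to hold on the relevant subteam, which we encode via a guard variable and a dependence atom). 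After this lifting, all the genuine team-semantic content is concentrated in the conjunction $\chi$ of atoms, and whatever is left — the first-order literals, the guards, the case-distinction structure — is a first-order formula $\theta$, which by Flatness (Theorem \ref{flatness}) behaves classically.

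The main obstacle, and the step I expect to need the most care, is the interaction between the failure of locality and the disjunction rule under strict semantics: every time we pull a quantifier across $\vee$ or lift an atom out of a disjunct, we must faithfully simulate the \emph{disjoint} team split, and we must verify that the fresh "side-indicator" variables together with the dependence atoms exactly reconstruct the original splitting semantics without accidentally allowing extra models or losing models. Getting the guards and dependence atoms precisely right — so that the translation is a genuine logical equivalence (in the strict semantics), not merely an implication in one direction — is the technical heart of the argument; the universal-quantifier count and the "no new atoms" requirements are then routine to check by inspection of each rewriting rule.
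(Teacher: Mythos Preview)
Your overall strategy is sound and matches the paper in spirit: structural transformation, dependence atoms to pay for pushing $\exists$ left of $\forall$, and guard variables to encode the disjoint split of $\vee$. The existential and conjunction cases would go through essentially as you describe.

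The genuine gap is in your third step, lifting non-first-order atoms out from under a disjunction. You write that an atom under a disjunct ``only needs to hold on the relevant subteam, which we encode via a guard variable and a dependence atom''. That mechanism is not enough. Suppose the matrix contains $(\tuple u \subseteq \tuple v) \vee \theta$. Once you have tagged assignments with a side-indicator $a$, the inclusion atom must hold on the subteam $X(a=0)$ and be \emph{trivially} satisfied on $X(a=1)$. No conjunction of a dependence atom with a first-order guard will make an inclusion or independence atom behave that way at the level of the whole team $X$. What the paper does instead is \emph{modify the atom itself}: it introduces a relativization operator $\rel_a$ that sends $\tuple u \subseteq \tuple v$ to $a\tuple u \subseteq a\tuple v$, $\indep{\tuple u}{\tuple v}{\tuple w}$ to $\indep{a\tuple u}{\tuple v}{\tuple w}$, and $\dep(\tuple u,v)$ to $\dep(a\tuple u,v)$, and proves (Lemma~\ref{rel.lemma}) that $\M\models_X \rel_a(\alpha)$ holds iff $\M\models_{X(a=\tuple c)}\alpha$ for every value $\tuple c$. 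This is the missing idea in your proposal.

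There is a second ingredient you do not mention: relativization alone is still not enough, because on the ``wrong'' slice $X(a=1)$ you must arrange that the (unrelativized) atom is vacuously true. The paper secures this by maintaining throughout the induction the invariant that every \emph{non-conditional} variable of every $\mathcal{C}$-atom is existentially quantified (this is why even the base case for a bare atom introduces fresh existentials), so that on the wrong side one can set those variables to a single constant and the atom trivially holds. Without this invariant the relativized conjunction $\rel_a(\chi_0)\wedge \rel_a(\chi_1)$ could fail on the union team even when each $\chi_i$ holds on its own slice. Your multi-phase plan (first prenex, then separate $\chi$ from $\theta$) makes it hard to carry such an invariant; the paper instead does a single structural induction that keeps the result in the shape $\forall^*\exists^*(\chi\wedge\theta)$ at every step, with the non-conditional-variables invariant baked in.
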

As mentioned previously, Proposition \ref{thm:loc} is the key property used in the prenex normal form translations of \cite{galhankon13, hannula13}, and it does not hold in general in the strict team semantics setting.  To illustrate this, let $\phi:= w \sub x$ and $\psi:= u \sub v \tai w \sub v$, and consider a model $\M = \{0,1,2\}$ and a team $X$ defined as

\begin{center}
    \begin{tabular}{ c | c | c | c |}
      & $u$ & $v$ & $w$ \\ \hline
$s_0$ & $0$ & $1$ & $2$ \\ \hline
$s_1$ & $1$ & $0$ & $1$ \\ \hline
$s_3$ & $2$ & $1$ & $0$ \\ \hline
    \end{tabular}
\end{center}
First we note that since $x$ does not appear free in $\psi$, $\forall x \phi \ja \psi$ is logically equivalent to $\forall x(\phi \ja \psi)$ under the lax semantics \cite{hannula13}. However, this is not the case if we are dealing with the strict semantics. Then $\M \models_X \forall x(\phi \ja \psi)$ but $\M \not\models_X \psi$ (and hence $\M \not\models_X \forall x(\phi \ja \psi)$), and the latter is due to the strict disjunction.
It also easy to construct an analogous example where $\psi$ is an existentially quantified formula in which $x$ does not appear free, and $\forall x(\phi \ja \psi)$ is not logically equivalent to $\forall x \phi \ja \psi$ under the strict semantics. 

The following restricted version of Proposition \ref{thm:loc} in Lemma \ref{fakeloc} is however true for the strict semantics. Lemma \ref{rename} states that we can rename variables in quantifier-free $\FO(\C)$ formulas. The proofs of these lemmas are straightforward inductions on the complexity of the formula, and are thus omitted.

\begin{lem}\label{fakeloc}
Let $\chi$ be a conjunction of $\C$-atoms and $\theta$ a first-order formula. Then for all models $\M$ and teams $X$ and sets $V$ with $\Fr(\chi\ja \theta)\subseteq V\subseteq \Dom(X)$,
$$\M \models_X \chi \ja\theta \Leftrightarrow \M \models_{X\upharpoonright V} \chi \ja \theta.$$
\end{lem}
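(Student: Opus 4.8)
The statement to prove is Lemma~\ref{fakeloc}: for $\chi$ a conjunction of $\C$-atoms, $\theta$ a first-order formula, and $V$ with $\Fr(\chi \ja \theta) \subseteq V \subseteq \Dom(X)$, we have $\M \models_X \chi \ja \theta \Leftrightarrow \M \models_{X \upharpoonright V} \chi \ja \theta$.

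The plan is to treat the conjunction componentwise using the semantic clause for $\wedge$: $\M \models_X \chi \ja \theta$ iff $\M \models_X \chi$ and $\M \models_X \theta$, and similarly for $X \upharpoonright V$. So it suffices to establish locality separately for $\theta$ (a first-order formula) and for $\chi$ (a conjunction of atoms), then for each individual atom. For $\theta$ first-order: by the Flatness theorem (Theorem~\ref{flatness}), $\M \models_X \theta$ iff $\M \models_s \theta$ for all $s \in X$; since $\Fr(\theta) \subseteq V$, classical first-order locality gives $\M \models_s \theta \Leftrightarrow \M \models_{s \upharpoonright V} \theta$, and as $s$ ranges over $X$ exactly when $s\upharpoonright V$ ranges over $X\upharpoonright V$, flatness again yields $\M \models_{X\upharpoonright V} \theta$. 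For $\chi$, I would induct on the number of conjuncts, the inductive step being immediate from the $\wedge$ clause, so the real content is the base case: a single atom $A \in \C$, where $\C \subseteq \{\dep(\ldots), \bot_{\rm c}, \subseteq\}$.

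For a single atom, note that its satisfaction condition, as given in the definitions, refers only to the values $s(\vec x), s(\vec y), s(\vec z)$ of the variables appearing in the atom — all of which lie in $V$. Concretely: for $\dep(\vec x, y)$, the condition ``$s(\vec x) = s'(\vec x) \Rightarrow s(y) = s'(y)$ for all $s, s' \in X$'' depends only on $X \upharpoonright \{\vec x, y\}$, hence is unchanged when $X$ is replaced by $X \upharpoonright V$; for $\vec x \subseteq \vec y$, the condition $X(\vec x) \subseteq X(\vec y)$ is literally a statement about $(X\upharpoonright V)(\vec x)$ and $(X\upharpoonright V)(\vec y)$; for $\indep{\vec x}{\vec y}{\vec z}$, the existence of a suitable $s'' \in X$ with the prescribed values on $\vec x \vec y \vec z$ is equivalent to the existence of such an assignment in $X \upharpoonright V$, since one can freely extend or restrict assignments outside of $\{\vec x, \vec y, \vec z\}$. (Here it is worth observing that the atomic clauses are identical in strict and lax semantics, so there is no subtlety from the semantics used for connectives — the atoms themselves are ``local'' unconditionally.) This is exactly the induction ``on the complexity of the formula'' the authors refer to, though since $\chi \ja \theta$ has a fixed shallow shape, it is really just the finite case analysis over atom types plus flatness.

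The main obstacle is essentially nonexistent — this lemma is easy precisely because the formula $\chi \ja \theta$ avoids disjunction and quantifiers, which are the connectives whose strict-semantics clauses (disjunction requiring a partition, the existential using a function $F \colon X \to M$) fail to commute with restriction; that failure is exactly what the preceding counterexample with $u \sub v \tai w \sub v$ illustrates. If I had to name a point requiring a word of care, it is simply being explicit that restricting to $V$ induces a well-defined surjection $X \twoheadrightarrow X \upharpoonright V$ (possibly collapsing assignments that agree on $V$), and checking that none of the atomic conditions distinguish such collapsed assignments — which is immediate since every variable mentioned in $\chi$ or $\theta$ is already in $V$.
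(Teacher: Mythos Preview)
Your proposal is correct and matches the paper's approach: the paper omits the proof entirely, describing it as a ``straightforward induction on the complexity of the formula,'' which is precisely what you carry out---handling conjunction by the semantic clause, each $\C$-atom by inspecting its satisfaction condition (which depends only on the values of its variables, all in $V$), and the first-order part $\theta$ via flatness and classical locality. Your remark that the result is easy exactly because disjunction and quantifiers (the strict-semantics trouble spots) are absent is also the point the paper makes immediately before stating the lemma.
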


\begin{lem}\label{rename}
Let $\phi \in \FO(\mathcal{C})$ be quantifier-free formula, and let $\M$  be a model and $X$ a team such that $\Fr(\phi) \sub \Dom(X)$. Then for any $x \in \Dom(X)$ and $y \not\in \Dom(X)$, 
$$\M \models_X \phi \Leftrightarrow \M \models_{X'} \phi'$$
where $\phi'$ is obtained from $\phi$ by replacing all occurrences of $x$ by $y$ and $X'$ is obtained from $X$ by replacing each $s \in X$ by $s'$
 which agrees with $s$ in $\Dom(X)\setminus \{x\}$ and maps $y$ to $s(x)$.
\end{lem}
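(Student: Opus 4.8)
The plan is to prove the equivalence by a straightforward induction on the structure of the quantifier-free formula $\phi$, carrying the renaming $s \mapsto s'$ along at every step. The single observation that drives the whole argument is that the assignment map $s \mapsto s'$ is a \emph{bijection} from $X$ onto $X'$: it is well-defined and surjective by construction, and it is injective because $y \notin \Dom(X)$, so that $s'$ records both the value $s(x)$ (now stored at $y$) and the values of $s$ on $\Dom(X)\setminus\{x\}$. Moreover this bijection is \emph{value-preserving}, in the sense that for every tuple $\tuple v$ of variables occurring in $\phi$, with renamed counterpart $\tuple v'$ in $\phi'$, one has $s(\tuple v) = s'(\tuple v')$ for all $s \in X$. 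Every atomic and compound clause of the semantics will be seen to transfer across this bijection.

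First I would settle the base cases. For a first-order literal $\alpha$, Flatness (Theorem \ref{flatness}) reduces $\M \models_X \alpha$ to $\M \models_s \alpha$ for every $s \in X$; since $\alpha'$ is $\alpha$ with $x$ renamed to $y$ and $s'$ assigns to $y$ exactly what $s$ assigns to $x$, the standard first-order substitution lemma gives $\M \models_s \alpha \Leftrightarrow \M \models_{s'} \alpha'$, and the flatness equivalence for $X'$ then closes the case. For the $\mathcal{C}$-atoms the semantic conditions are formulated entirely in terms of equalities and inclusions among the value-tuples $s(\tuple v)$; by value-preservation these conditions hold of $X$ exactly when the renamed conditions hold of $X'$. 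The only point requiring a word is the conditional independence atom $\indep{\tuple x}{\tuple y}{\tuple z}$, whose clause asserts the existence of a witness $s'' \in X$: here the bijectivity of $s \mapsto s'$ guarantees that witnesses in $X$ correspond exactly to witnesses in $X'$.

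For the inductive steps, conjunction is immediate: since $\Fr(\psi), \Fr(\theta) \subseteq \Fr(\phi) \subseteq \Dom(X)$, the induction hypothesis applies to each conjunct with the same renaming, and $\M \models_X \psi \ja \theta$ unfolds to the conjunction of the two renamed statements about $X'$. The disjunction case is where the strict semantics must be handled with care. Given a witnessing partition $X = Y \cup Z$ with $Y \cap Z = \emptyset$, I would transport it through the bijection to sets $Y', Z' \subseteq X'$; because the map is injective, $Y' \cup Z' = X'$ and $Y' \cap Z' = \emptyset$, so the disjointness demanded by strict disjunction is preserved. The induction hypothesis (applied to $Y,Y'$ and to $Z,Z'$, noting that these teams share the domains of $X$ and $X'$ respectively) then yields $\M \models_{Y'} \psi'$ and $\M \models_{Z'} \theta'$. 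The converse direction is symmetric, pulling a partition of $X'$ back along the inverse bijection.

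The step I expect to be the only genuine subtlety is precisely this disjunction case: under the lax semantics a renaming that merged or duplicated assignments would still be harmless, but the strict clause insists on a disjoint cover, and it is the bijectivity of $s \mapsto s'$ (which rests on the freshness of $y$) that keeps disjoint covers disjoint in both directions. Everything else is bookkeeping: one checks throughout that the free-variable containment condition needed to invoke the induction hypothesis is inherited by subformulas, which is automatic, since renaming does not change which variables occur and $\Fr$ of a subformula is contained in $\Fr(\phi)$.
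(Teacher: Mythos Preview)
Your proposal is correct and follows exactly the approach the paper indicates: the paper states that the proof is a ``straightforward induction on the complexity of the formula'' and omits the details, and your write-up supplies precisely that induction, with the bijectivity of $s\mapsto s'$ doing the work in the strict-disjunction step.
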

For the translation, we need the following three definitions. Definition \ref{Qf} introduces a mapping that, given a sentence $\phi$ and its subformula $\psi$, gives us the variables over which $\psi$ (as a subformula of $\phi$) is evaluated. 
 Definition \ref{cond} describes some variables of a $\C$-atom as non-conditional. Definition \ref{rel} introduces an operation which relativizes each $\C$-atom in a formula.
\begin{defi}\label{Qf}
Let $\phi \in \FO(\C)$ be a sentence. For a subformula of $\phi$, the mapping $\Qf_{\phi}$ is defined recursively as follows:
\begin{itemize}
\item $\Qf_{\phi}(\phi)= \emptyset$,
\item if $\psi = \psi_0 \ja \psi_1$ or $\psi = \psi_0 \tai \psi_1$, then $\Qf_{\phi}(\psi_i)= \Qf_{\phi}(\psi)$, for $i=1,2$,
\item if $\psi = \on x \psi_0$ or $\psi = \forall x \psi_0$, then $\Qf_{\phi}(\psi_0) = \Qf_{\phi}(\psi) \cup \{x\}$.
\end{itemize}
\end{defi}
\begin{defi}\label{cond}
For a $\C$-atom $\a$, we say that $x$ is \emph{non-conditional} in $\a$ if
\begin{itemize}
\item $\a$ is $\indep{\tuple x}{\tuple y}{\tuple z}$ and $x$ is listed in $\tuple y \tuple z$,
\item $\a$ is $\dep(x_1, \ldots ,x_n)$ and $x$ is $x_n$,
\item $\a$ is $\tuple x \sub \tuple y$ and $x$ is listed in $\tuple x \tuple y$.
\end{itemize}
\end{defi}
Note that a $\C$-atom $\a$ is satisfied in $X$ if each non-conditional variable in $\a$ has the same constant value in $X$.
\begin{defi}\label{rel}
Let $\tuple x$ be a sequence of variables and $\phi\in \FO(\C)$ a formula. Then we let $\rel_{\tuple x}(\phi)$ be the formula obtained from $\phi$ by substituting each $\C$-atom as follows:
\begin{align*}
\dep(\tuple u, v)   \hspace{3mm} &\mapsto\hspace{3mm}  \dep(\tuple x \tuple u, v),\\
\indep{\tuple u}{\tuple v}{\tuple w} \hspace{3mm} &\mapsto \hspace{3mm} \indep{\tuple x\tuple u}{\tuple v}{\tuple w}, \\
\tuple u \sub \tuple v \hspace{3mm} &\mapsto \hspace{3mm} \tuple x \tuple u \sub \tuple x  \tuple v.
\end{align*}
\end{defi}

The following lemma relates $\rel_{\tuple x}$ to the announcement operator $\delta$ introduced by Galliani in \cite{Galliani13E}. Namely, it states that $\M \models_X \rel_{\tuple x}(\a) \Leftrightarrow \M \models_X \delta \tuple x \a $.

\begin{lem}\label{rel.lemma}
Let $\M$ be a model, $X$ a team and $\a$ a $\C$-atom. Then 
$$\M \models_X \rel_{\tuple x}(\a) \Leftrightarrow \forall \tuple a \in M^{|\tuple x|}( \M \models_{X(\tuple x = \tuple a)} \a)$$
where $X(\tuple x = \tuple a):= \{s \in X \mid s(\tuple x)=\tuple a\}$.
\end{lem}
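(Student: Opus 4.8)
The plan is to prove the equivalence by a short case analysis on the syntactic shape of the $\C$-atom $\a$, in each case simply unwinding the pertinent semantic rule on both sides. Throughout I may assume $\tuple x \subseteq \Dom(X)$, since otherwise neither side is well-formed, and I note at the outset that if $X(\tuple x=\tuple a)=\emptyset$ for some $\tuple a\in M^{|\tuple x|}$ then $\M\models_{X(\tuple x=\tuple a)}\a$ holds by the Empty Team Property (Proposition~\ref{thm:etp}); hence such blocks constrain neither side, and the right-hand side is really a condition ranging over the nonempty blocks $X(\tuple x=\tuple a)$ with $\tuple a\in X(\tuple x)$. The single recurring observation is that, for $s,s'\in X$, the hypothesis ``$s(\tuple x)=s'(\tuple x)$'' --- i.e.\ that $s$ and $s'$ lie in the same block --- is exactly the conjunct that $\rel_{\tuple x}$ adjoins to the conditioning part of $\a$.

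For $\a=\dep(\tuple u,v)$ we have $\rel_{\tuple x}(\a)=\dep(\tuple x\tuple u,v)$, and $\M\models_X\dep(\tuple x\tuple u,v)$ says precisely that $s(v)=s'(v)$ whenever $s,s'\in X$ satisfy $s(\tuple x)=s'(\tuple x)$ \emph{and} $s(\tuple u)=s'(\tuple u)$; grouping the relevant pairs $(s,s')$ by their common value $\tuple a=s(\tuple x)$, this is literally the assertion that $\dep(\tuple u,v)$ holds in each $X(\tuple x=\tuple a)$, which is the right-hand side. For $\a=\tuple u\sub\tuple v$ we have $\rel_{\tuple x}(\a)=\tuple x\tuple u\sub\tuple x\tuple v$, whose satisfaction means every $s\in X$ has some $s'\in X$ with $s'(\tuple x\tuple v)=s(\tuple x\tuple u)$, equivalently with $s'(\tuple x)=s(\tuple x)$ and $s'(\tuple v)=s(\tuple u)$; read block by block this is exactly that $\tuple u\sub\tuple v$ holds in each $X(\tuple x=\tuple a)$.

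The independence case $\a=\indep{\tuple u}{\tuple v}{\tuple w}$, with $\rel_{\tuple x}(\a)=\indep{\tuple x\tuple u}{\tuple v}{\tuple w}$, is the one needing (a little) care, though it remains routine. From left to right: given $\tuple a$ and $s,s'\in X(\tuple x=\tuple a)$ with $s(\tuple u)=s'(\tuple u)$, we get $s(\tuple x\tuple u)=s'(\tuple x\tuple u)$, so the left-hand side supplies $s''\in X$ with $s''(\tuple x\tuple u\tuple v\tuple w)=s(\tuple x\tuple u\tuple v)s'(\tuple w)$; then $s''(\tuple x)=\tuple a$, so $s''\in X(\tuple x=\tuple a)$ and $s''(\tuple u\tuple v\tuple w)=s(\tuple u\tuple v)s'(\tuple w)$, as needed. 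From right to left: given $s,s'\in X$ with $s(\tuple x\tuple u)=s'(\tuple x\tuple u)$, set $\tuple a:=s(\tuple x)=s'(\tuple x)$; the right-hand side applied to this block yields $s''\in X(\tuple x=\tuple a)$ with $s''(\tuple u\tuple v\tuple w)=s(\tuple u\tuple v)s'(\tuple w)$, and since $s''(\tuple x)=\tuple a=s(\tuple x)$ this upgrades to $s''(\tuple x\tuple u\tuple v\tuple w)=s(\tuple x\tuple u\tuple v)s'(\tuple w)$.

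There is no genuine obstacle here; the only point demanding attention is the tuple bookkeeping in the independence case, specifically verifying that a witness $s''$ produced on one side automatically lies in the block required by the other side --- which is immediate, since in both readings $s''$ must agree with $s$ (hence with $s'$) on $\tuple x$.
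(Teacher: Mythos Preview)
Your proof is correct and follows essentially the same approach as the paper: a case analysis on the form of the $\C$-atom, unwinding the semantic rules on both sides and checking that the witness produced in the independence case lands in the right block. The only cosmetic difference is that the paper dispatches the dependence case by reducing $\dep(\tuple u,v)$ to $\indep{\tuple u}{v}{v}$, whereas you handle it directly.
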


\begin{proof}
Let $\M$ be a model, $X$ a team and $\a$ a $\C$-atom. Since $\M \models_X \dep(\tuple u ,v) \Leftrightarrow \M \models_X \indep{\tuple u}{v}{v}$, it suffices to prove the claim for independence and inclusion atoms.
\begin{itemize}
\item Assume that $\a = \indep{\tuple u}{\tuple v}{\tuple w}$ when $\rel_{\tuple x}(\a) =\indep{\tuple x\tuple u}{\tuple v}{\tuple w}$, and assume first that  $\M \models_X \indep{\tuple x\tuple u}{\tuple v}{\tuple w}$. Let $\tuple a \in M^{|\tuple x|}$. If $s,s'\in X(\tuple x= \tuple a)$ are such that $s(\tuple u)=s'(\tuple u)$, then by the assumption we find $s''\in X$ which agrees with $s$ in $\tuple x \tuple u \tuple v$ and with $s'$ in $\tuple w$. Since $s''\in X(\tuple x=\tuple a)$, we obtain $ \M \models_{X(\tuple x = \tuple a)} \indep{\tuple u}{\tuple v}{\tuple w}$.

For the other direction, assume that for all $\tuple a \in M^{|\tuple x|}$, $\M \models_{X(\tuple x = \tuple a)}  \indep{\tuple u}{\tuple v}{\tuple w}$, and let $s,s' \in X$ be such that $s(\tuple x \tuple u)=s'(\tuple x \tuple u)$. By the assumption we find $s''\in X(\tuple x= s(\tuple x))$ which agrees with $s$ in $\tuple u\tuple v$ and with $s'$ in $\tuple w$. Since $s''(\tuple x) = s(\tuple x)$, we obtain $\M \models_X \indep{\tuple x\tuple u}{\tuple v}{\tuple w}$.

\item Assume that $\a = \tuple u \sub \tuple v $ when $\rel_{\tuple x}(\a)=\tuple x \tuple u \sub \tuple x  \tuple v$, and assume first that $\M \models_X \tuple x \tuple u \sub \tuple x  \tuple v$. Let $\tuple a \in M^{|\tuple x|}$. If $s \in X(\tuple x= \tuple a)$, then by the assumption we find $s'\in X$ such that $s(\tuple x \tuple u)=s'(\tuple x \tuple v)$. Since then $s' \in X(\tuple x= \tuple a)$, we obtain $\M \models_{X(\tuple x =\tuple a)} \tuple u \sub \tuple v$.

For the other direction, assume that for all $\tuple a \in M^{|\tuple x|}$, $\M \models_{X(\tuple x = \tuple a)} \tuple u \sub \tuple v$, and let $s \in X$. By the assumption we find $s'\in X(\tuple x= s(\tuple x))$ such that $s(\tuple u)=s'(\tuple v)$. Since $s(\tuple x) = s'(\tuple x)$, it follows that $\M \models_X \tuple x \tuple u \sub \tuple x  \tuple v$ which concludes the proof.
\end{itemize}
\end{proof}

For Theorem \ref{NNF}, it now suffices to prove the following lemma. In the following proof we will write $X[F/x]$ for the team $\{s(a/x) \mid s \in X, a= F(s \upharpoonright V)\}$ if $F$ is a function from $X \upharpoonright V$ into $M$. In the case of existential quantification, it will be sometimes useful, and always sufficient, to look for a witness $F: X \upharpoonright V \rightarrow M$, for some $V \sub \Dom(X)$.
\begin{lem}
Let $\phi $ be a $\FO(\mathcal{C})$ sentence  in which every variable is quantified exactly once. Then for any subformula $\psi$ of $\phi$, there is a formula $\psi' \in \FO(\dep(\ldots),\mathcal{C})$ of the form
$$\forall x_1 \ldots \forall x_m \on x_{m+1} \ldots \on x_{m+n} (\chi \ja \theta)$$
where 
\begin{enumerate}
\item $\chi$ is a conjunction of $\{\dep(\ldots),\mathcal{C}\}$-atoms where all non-conditional variables are existentially quantified, and $\theta$ is a quantifier-free first-order formula, 
\item $x_1, \ldots ,x_m$ are universally quantified in $\psi$ and $x_{m+1}, \ldots ,x_{m+n}$ are new or existentially quantified in $\psi$,

\item for all models $\M$ and teams $X$ with $\Dom(X) = \Qf_{\phi}(\psi)$,
$$\M \models_X \psi \Leftrightarrow \M \models_X \psi'.$$
\end{enumerate}
\end{lem}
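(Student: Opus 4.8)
The plan is to proceed by induction on the structure of the subformula $\psi$, producing for each $\psi$ a prenex formula $\psi'$ of the required shape together with the invariants (1)--(3). The base case is when $\psi$ is a first-order literal or a $\C$-atom: a literal is already quantifier-free, so we take $m=n=0$, $\chi=\top$ (or omit it), $\theta=\psi$; for a $\C$-atom $\a$ we must move its non-conditional variables under existential quantifiers, which we do by introducing fresh existential variables $\on \vec{z}$, writing $\chi$ as the conjunction of (the renamed) $\a$ with dependence atoms $\dep(\emptyset, z)$ forcing each new $z$ to be constant, and letting $\theta$ record the equalities $z = u$ tying the fresh variables back to the originals. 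The point of this bookkeeping is exactly the observation highlighted before Definition~\ref{rel}: a $\C$-atom holds in $X$ as soon as its non-conditional variables are constant on $X$, so this rewriting is sound.

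For the inductive step I would handle the connectives and quantifiers as follows. Conjunction $\psi_0 \ja \psi_1$: pull the two prenex prefixes out in front; the universal blocks concatenate, and since every variable of $\phi$ is quantified exactly once the total number of universals is still $\le k$; the matrices combine as $(\chi_0 \ja \chi_1) \ja (\theta_0 \ja \theta_1)$, which is again of the form $\chi \ja \theta$. Existential quantifier $\on x\,\psi_0$: simply absorb $x$ into the existential block of $\psi_0'$, which is legal because $x$ is then ``existentially quantified in $\psi$''. The two genuinely delicate cases are disjunction and the universal quantifier. For $\psi_0 \tai \psi_1$, the naive move of merging prefixes fails under strict semantics precisely because of the splitting-and-disjointness interaction illustrated by the $w\sub x$, $u\sub v \tai w\sub v$ example; the fix is to first apply $\rel_{\vec u}$ (Definition~\ref{rel}, justified by Lemma~\ref{rel.lemma}) to each disjunct, where $\vec u$ enumerates the universally quantified variables of that disjunct, so that the $\C$-atoms become insensitive to which part of the split they land in, and only then pull quantifiers out; one also renames the $\forall$-variables of the two disjuncts apart (Lemma~\ref{rename}) so the universal blocks can be shared rather than concatenated, keeping the count $\le k$.

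The universal case $\forall x\,\psi_0$ is where I expect the main obstacle. Here one wants $\forall x\,\psi_0' $, with $x$ prepended to the universal block, to be equivalent to $\psi'$; but under strict semantics $\forall x(\chi \ja \theta)$ is \emph{not} in general equivalent to $(\forall x\,\chi)\ja\theta$ even when $x$ is not free in $\theta$, again because of how disjunctions inside $\theta$ interact with the enlarged team $X[M/x]$. The resolution is to relativize: replace $\psi_0'$ by $\rel_{x}(\psi_0')$ before prefixing $\forall x$, using Lemma~\ref{rel.lemma} to see that relativizing the $\C$-atoms by $x$ exactly compensates for the fact that after $\forall x$ the team is partitioned into the slices $X[M/x](x=a)$. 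Verifying clause (3) in this case — that $\M\models_X \forall x\,\psi_0 \Leftrightarrow \M\models_X \forall x\,\rel_x(\psi_0')$ for teams $X$ with $\Dom(X)=\Qf_\phi(\forall x\,\psi_0)$ — is the crux: one uses the induction hypothesis on $\psi_0$ slice-by-slice over the values of $x$, together with Lemma~\ref{fakeloc} to restrict attention to the relevant variables, and Lemma~\ref{rel.lemma} to convert between $\rel_x$ and the slicing. Checking that invariant (1), the constancy-of-non-conditional-variables condition, is preserved under $\rel_x$ (the newly prepended $x$ is conditional in every atom it is added to) and under the disjunction fix completes the argument, and Theorem~\ref{NNF} then follows by applying the lemma to $\psi=\phi$ itself, where $\Qf_\phi(\phi)=\emptyset$ and the team is $\{\emptyset\}$.
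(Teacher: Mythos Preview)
Your inductive plan has the difficulty of the two quantifier cases inverted, and it misses the central device in the disjunction step.

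The universal case is in fact trivial. If $\psi=\forall x\,\psi_0$ and the induction hypothesis gives $\psi_0'$ equivalent to $\psi_0$ on every team with domain $\Qf_\phi(\psi_0)=\Qf_\phi(\psi)\cup\{x\}$, then $\psi':=\forall x\,\psi_0'$ is immediately equivalent to $\psi$ on teams with domain $\Qf_\phi(\psi)$, since $X[M/x]$ has exactly the required domain. No relativization is needed here; the slicing you worry about never arises because the induction hypothesis is already stated for the enlarged domain.

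The existential case, by contrast, is \emph{not} the step you can ``simply absorb''. If $\psi_0'=\forall x_1\ldots\forall x_m\,\exists\vec y\,(\chi\wedge\theta)$, then $\exists x\,\psi_0'$ is not in $\forall\exists$-form; you must push $\exists x$ past the universal block, and under strict semantics that is only sound once you record that the value of $x$ depends on nothing beyond the ambient team. The paper adds the atom $\dep(\vec z,x)$ with $\vec z$ listing $\Qf_\phi(\psi)$ and takes $\psi':=\forall x_1\ldots\forall x_m\,\exists x\,\exists\vec y\,(\dep(\vec z,x)\wedge\chi\wedge\theta)$. The same issue bites in your conjunction step: merely concatenating the prefixes and conjoining the matrices fails in the direction $\psi'\Rightarrow\psi$, because the existential witnesses for one conjunct may then depend on the other conjunct's universals. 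The paper renames so that the two universal blocks coincide (taking the longer one) and pads the shorter side with dependence atoms $\dep(x_1,\ldots,x_k,y_i)$ constraining its existentials.

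For disjunction, relativizing by the universally quantified variables does not encode what is needed. The obstacle is that after pulling the prefix out, the split for $\vee$ would take place on the large team $X[M/\vec x][\ldots]$, and such a split need not factor through any split of $X$. The paper's remedy is to introduce, at the level of $X$, an existentially chosen \emph{selector} variable $a$ together with two constants $b\neq c$ (enforced via $\dep(b)\wedge\dep(c)\wedge b\neq c$), so that $a=b$ versus $a=c$ records the intended partition of $X$; the $\C$-atoms of each disjunct are then relativized by $a$ via $\rel_a$, and the ``non-conditional variables constant $\Rightarrow$ atom holds'' observation you cite is used to make each disjunct's atoms hold vacuously on the \emph{other} $a$-slice (by setting those existentials to a fixed value there). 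This selector-variable idea is what your sketch lacks.

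Finally, a small error in the base case: do not add $\dep(\emptyset,z)$. Combined with the equality $z=u$ it would force the original non-conditional variable $u$ to be constant on $X$, which is strictly stronger than the atom asserts. The paper simply renames each non-conditional $x_i$ to a fresh $x_i'$ and conjoins the equalities $x_i'=x_i$.
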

\begin{proof}
Let $\phi $ be a $\FO(\mathcal{C})$ sentence. We prove the claim by induction on the complexity of the subformula $\psi$ of $\phi$.
\begin{itemize}
\item If $\psi$ is a first-order atomic or negated atomic formula, then we choose $\psi':=\psi$. If $\psi$ is a dependency atom with non-conditional $x_1, \ldots ,x_n$, then we let 
$$\psi':= \on x'_1 \ldots \on x'_n (\psi' \ja \bigwedge_{1 \leq i \leq n} x'_i = x_i)$$
where $\psi'$ is obtained from $\psi$ by replacing each occurence of $x_i$ by $x'_i$, for $1 \leq i \leq n$.
Items 1 and 2 of the claim are now satisfied, and item 3 follows immediately by Lemma \ref{fakeloc}.
\item Assume that $\psi = \forall x \psi_0$. Then by the induction assumption, for $\psi_0$ there is
$$\psi'_0 =\forall x_1 \ldots \forall x_m \on x_{m+1} \ldots \on x_{m+n}(\chi \ja \theta)$$
such that items 1-3 hold. We choose $\psi':= \forall x \psi'_0$. Clearly items 1-2 hold. For item 3, we have by the induction assumption that for all $\M$ and $X$ with $\Dom(X) = \Qf_{\phi}(\psi)$,
$$\M \models_X \psi \Leftrightarrow \M \models_{X[M/x]} \psi_0  \Leftrightarrow \M \models_{X[M/x]} \psi'_0  \Leftrightarrow \M \models_{X} \psi'.$$
For the second equivalence note that $\Qf_{\phi}(\psi_0)=\Dom(X[M/x])$.
\item Assume that $\psi = \on x \psi_0$. Then by the induction assumption, for $\psi_0$ there is
$$\psi'_0 =\forall x_1 \ldots \forall x_m \on x_{m+1} \ldots \on x_{m+n}(\chi \ja \theta)$$
such that items 1-3 hold. We define
\begin{equation*}
\psi':=\forall x_1 \ldots \forall x_m \on x\on x_{m+1} \ldots \on x_{m+n}(\dep(\tuple z ,x) \ja\chi \ja \theta)
\end{equation*}
where $\tuple z$ lists $\Qf_{\phi}(\psi)$. Items 1 and 2 clearly hold, we show that item 3 holds. For this let $\M$ and $X$ be such that $\Dom(X)=\Qf_{\phi}(\psi)$. Analogously to the previous case $\M \models_X \psi \Leftrightarrow \M \models_X \on x \psi'_0$, so it suffices to show that 
$$\M \models_X \on x\psi'_0 \Leftrightarrow \M \models_X \psi'.$$ 
Assume first that $\M \models_X \on x\psi'_0$ when there is a function $F_x : X \rightarrow M$ such that
\begin{equation}\label{eq-1}
\M \models_{X[F_x/x]} \psi'_0.
\end{equation} 
Since no reuse of variables is allowed in $\phi$, variables $x,x_1, \ldots ,x_{m+n}$ are pairwise distinct and not in $\Qf_{\phi}(\psi)$ when existential quantification over them preserves the size of the team. Therefore, and by \eqref{eq-1}, we find $F_i: X[M/x_1]\ldots [M/x_{m}] \rightarrow M$, for $1 \leq i \leq n$, such that
$\M \models_{X'} \chi \ja \theta$, for
$$X':=  X[F_x/x][M/x_1]\ldots [M/x_{m}][F_1/x_{m+1}]\ldots [F_n/x_{m+n}].$$
Also, since $\tuple z$ lists $\Dom(X)$, we have $\M \models_{X'}\dep(\tuple z,x)$.
Now $\M \models_X \psi'$ follows since $X'$ is also of the form
$$X[M/x_1]\ldots [M/x_{m}][F_x/x][F_1/x_{m+1}]\ldots [F_n/x_{m+n}].$$
For the other direction, assume that $\M \models_X \psi'$ when we find $F_x,F_i: X[M/x_1]\ldots [M/x_{m}] \rightarrow M$, for $1 \leq i \leq n$, such that
$\M \models_{X'} \dep(\tuple z,x) \ja \chi \ja \theta$ where
$$X':=  X[M/x_1]\ldots [M/x_{m}][F_x/x][F_1/x_{m+1}]\ldots [F_n/x_{m+n}].$$
For $\M \models_X \on x\psi'_0$, it suffices to note that since $\M \models_{X'} \dep(\tuple z,x)$ and $\tuple z$ lists $\Dom(X)$, we can define $F_x$ already on $X$. This concludes the existential case.
\item Assume that $\psi = \psi_0 \ja \psi_1$. Then by the induction assumption, for $\psi_0$ and $\psi_1$ there are
\begin{align}
\psi'_0 &= \forall x_1 \ldots \forall x_m \on x_{m+1} \ldots \on x_{m+n}(\chi_0 \ja \theta_0),\label{psi0}\\
\psi'_1 &= \forall y_1 \ldots \forall y_k \on y_{k+1} \ldots \on y_{k+l}(\chi_1 \ja \theta_1),\label{psi1}
\end{align}
such that items 1-3 hold. We will construct a $\psi'$ for which the induction claim holds. First note that, for $i=1,2$, $\Qf_{\phi}(\psi) = \Qf_{\phi}(\psi_i)$, and hence by the induction assumption 
\begin{equation}\label{eq0}
\M\models_X \psi_i \Leftrightarrow \M \models_X \psi'_i
\end{equation}
for all $\M$ and $X$ with $\Dom(X) = \Qf_{\phi}(\psi)$. First we assume by symmetry that $k \leq m$. We then let $\chi'_1$ and $\theta'_1$ be obtained from $\chi_1$ and $\theta_1$ by replacing each occurence of $y_i$ by $x_i$, for $1 \leq i \leq k$. By item 2 of the induction assumption and the fact that no reusing of variables is allowed, no quantified variable of $\psi_0$ or $\psi_1$ is in $\Qf_{\phi}(\psi)$, and $\psi_0$ and $\psi_1$ do not share any quantified variables. Therefore by Lemma \ref{rename}, for all $\M$ and $X$ with $\Dom(X) = \Qf_{\phi}(\psi)$,
\begin{equation}\label{eq1}
\M \models_X \psi'_1 \Leftrightarrow \M \models_X \forall x_1 \ldots \forall x_k \on y_{k+1} \ldots \on y_{k+l}(\chi'_1 \ja \theta'_1).
\end{equation}
We then let 
\begin{equation}\label{eq2}
\psi^*_1 :=\forall x_1 \ldots \forall x_m \on y_{k+1} \ldots \on y_{k+l}(\chi^*_1 \ja \theta'_1)
\end{equation}
where
\begin{equation}\label{eq-3}
\chi^*_1 := \chi'_1 \ja \bigwedge_{1 \leq i \leq l} \dep(x_1, \ldots ,x_k,y_i).
\end{equation}
Note that in \eqref{eq2}, $m-k$ universal quantifiers are added, and thus new dependence atoms need to be introduced. Also note that in \eqref{eq-3}, the non-conditional variables $y_i$, for $1 \leq i \leq l$, are existentially quantified.  Now, using Lemma \ref{fakeloc} it is straightforward to check that for all $\M$ and $X$ with $\Dom(X) = \Qf_{\phi}(\psi)$,
\begin{equation}\label{eq3}
\M \models_X \forall x_1 \ldots \forall x_k \on y_{k+1} \ldots \on y_{k+l}(\chi'_1 \ja \theta'_1) \Leftrightarrow \M \models_X \psi^*_1.
\end{equation}
We now let
\begin{equation*}
\psi':=\forall x_1 \ldots \forall x_m \on x_{m+1} \ldots \on x_{m+n} \on y_{k+1} \ldots \on y_{k+l}(\chi_0 \ja \chi^*_1 \ja \theta_0\ja \theta'_1).
\end{equation*}
By the induction assumption, $x_1, \ldots , x_m$ are universally quantified in $\psi$, and $ x_{m+1}, \ldots ,x_{m+n}$ and $ y_{k+1}, \ldots  ,y_{k+l}$ are new or existentially quantified in $\psi$. Hence by \eqref{eq0}, \eqref{eq1} and \eqref{eq3} it suffices to show that $\M \models_X \psi'_0 \ja \psi^*_1 \Leftrightarrow \M \models_X \psi'$ for all $\M$ and $X$ such that $\Dom(X) = \Qf_{\phi}(\psi)$. So let $\M$ and $X$ be of this form, and assume first that $\M \models_X \psi'_0 \ja \psi^*_1$. Then there are functions $F_i,G_j: X[M/x_1]\ldots [M/x_m] \rightarrow M$, for $1 \leq i \leq n$ and $1 \leq j \leq l$, such that
$$\M \models_{X_F} \chi_0 \ja \theta_0$$
and 
$$\M \models_{X_G} \chi^*_1 \ja \theta'_1$$
where
$$X_F:= X[M/x_1]\ldots [M/x_m][F_1/x_{m+1}]\ldots[F_n/x_{m+n}]$$ 
and 
$$X_G:= X[M/x_1]\ldots [M/x_m][G_1/y_{k+1}]\ldots[G_l/y_{k+l}].$$
By Lemma \ref{fakeloc}, 
$$\M \models_{X_{F,G}} \chi_0 \ja \chi^*_1 \ja \theta_0 \ja \theta'_1$$
where
$$X_{F,G}:= X[M/x_1]\ldots [M/x_m][F_1/x_{m+1}]\ldots[F_n/x_{m+n}][G_1/y_{k+1}]\ldots[G_l/y_{k+l}].$$
Therefore, $\M \models_X \psi'$. The other direction is analogous, so we conclude the conjunction case.
\item Assume that $\psi = \psi_0 \tai \psi_1$. Again, we will construct a $\psi'$ for which the induction claim holds. Analogously to the conjunction case, we find $\psi'_0$ and $\psi^*_1$ of the form \eqref{psi0} and \eqref{eq2} and such that for all $\M$ and $X$ with $\Dom(X) =\Qf_{\phi}(\psi)$, $\M \models_X \psi_0 \Leftrightarrow \M \models_X \psi'_0$ and $\M \models_X \psi_1 \Leftrightarrow \M \models_X \psi^*_1$.
We then let
\begin{equation}\label{psi'}
 \psi':=\on a \on b \on c\forall x_1 \ldots \forall x_m\on x_{m+1} \ldots \on x_{m+n} \on y_{k+1} \ldots \on y_{k+l}  \xi
\end{equation}
where
\begin{equation}\label{xi}
 \xi:=\hspace{1mm}\dep(b) \ja\dep(c)\ja \rel_{a}(\chi_0) \ja \rel_{a}(\chi^*_1) \ja b \neq c\ja \big (  (\theta_0 \ja a=b) \tai (\theta'_1 \ja a= c)\big ).
\end{equation}
It suffices to show that for all $\M$ and $X$ with $\Dom(X) =\Qf_{\phi}(\psi)$,
\begin{equation}\label{eq4}
\M \models_X \psi'_0 \tai \psi^*_1 \Leftrightarrow \M \models_X \psi'.
\end{equation}
For this note that, although $\psi'$ is not yet of the right form due to the improper alternation of quantifiers, the existential quantifier block $\on a \on b \on c$ can be moved to the right-hand side of $\forall x_1 \ldots \forall x_m$ by proceeding analogously to the case of the existential quantifier.

For \eqref{eq4}, let $\M$ and $X$ be such that $\Dom(X) =\Qf_{\phi}(\psi)$, and assume first that $\M \models_X \psi'_0 \tai \psi^*_1$. Then we find $Y \cup Z = X$, $Y \cap Z = \emptyset$, such that $\M \models_Y \psi'_0$ and $\M \models_Z \psi^*_1$. Moreover, there are functions $F_i: Y[M/x_1]\ldots [M/x_m] \rightarrow M$, for $1 \leq i \leq n$, and $G_j: Z[M/x_1]\ldots [M/x_m] \rightarrow M$, for $1 \leq j \leq l$, such that
$$\M \models_{Y_F} \chi_0 \ja \theta_0$$
and 
$$\M \models_{Z_G} \chi^*_1 \ja \theta'_1$$
where
$$Y_F:= Y[M/x_1]\ldots [M/x_m][F_1/x_{m+1}]\ldots[F_n/x_{m+n}]$$ 
and 
$$Z_G:= Z[M/x_1]\ldots [M/x_m][G_1/y_{k+1}]\ldots[G_l/y_{k+l}].$$
By Lemma \ref{fakeloc}, we have
\begin{equation}\label{eq5}
\M \models_{Y'} \chi_0 \ja \theta_0 \hspace{1mm}\textrm{  and  }\hspace{1mm}
\M \models_{Z'} \chi^*_1 \ja \theta'_1
\end{equation}
where 
$$Y':= Y[M/x_1]\ldots [M/x_m][0/a][0/b][1/c][F_1/x_{m+1}]\ldots[F_n/x_{m+n}][0/y_{k+1}]\ldots[0/y_{k+l}]$$ 
and 
$$Z':= Z[M/x_1]\ldots [M/x_m][1/a][0/b][1/c][0/x_{m+1}]\ldots[0/x_{m+n}][G_1/y_{k+1}]\ldots[G_l/y_{k+l}],$$
for some distinct constant functions $0$ and $1$. For $\M \models_X \psi'$, it now suffices to show that $\M \models_{X'} \xi$ where $X':= Y'\cup Z'$. For the first-order part and dependence atoms $\dep(b)$ and $\dep(c)$, we have by the construction that 
$$\M \models_{X'}\dep(b) \ja\dep(c)\ja  b \neq c\ja (\theta_0 \ja a=b) \tai (\theta'_1 \ja a= c).$$
For the conjunction of relativized $\{\dep(\ldots),\C \}$-atoms $\rel_{a}(\chi_0) \ja \rel_{a}(\chi^*_1)$, first note that $\M \models_{Y'} \chi^*_1$ and $\M \models_{Z'}\chi_0$ because all the non-conditional variables that appear in $\chi^*_1$ and $\chi_0$ are mapped to $0$ in $Y'$ and $Z'$, respectively. By this and \eqref{eq5}, $\M \models_{Y'} \chi_0 \ja \chi^*_1$ and $\M \models_{Z'}\chi_0 \ja \chi^*_1$. Since $X'(a= 0)= Y'$, $X'(a=1)= Z'$, and $X'(a=x)= \emptyset$ for any other $x \in M$, we have by Lemma \ref{rel.lemma} that $\M \models_{X'} \rel_{a}(\chi_0) \ja \rel_{a}(\chi^*_1)$. Hence $\M \models_X \psi'$ which concludes the only if-part of \eqref{eq4}.

For the other direction, assume that $\M \models_X \psi'$. We show that $\M \models_X \psi'_0\tai \psi^*_1$ where $\psi'_0$ and $\psi^*_1$ are of the form \eqref{psi0} and \eqref{eq2}. By the assumption there are functions $H_d: X \rightarrow M$, for $d\in\{a,b,c\}$, and $F_i,G_j:  X[M/x_1]\ldots [M/x_m] \rightarrow M$, for $1 \leq i \leq n$ and $1 \leq j \leq l$, such that
\begin{equation}\label{eq6}
\M \models_{X'}  \dep(b) \ja\dep(c)\ja \rel_{a}(\chi_0) \ja \rel_{a}(\chi'_1) \ja b \neq c\ja \big (  (\theta_0 \ja a=b) \tai (\theta'_1 \ja a= c)\big ),
\end{equation}
for $X'$ defined as
$$X[H_a/a][H_b/b][H_c/c][M/x_1]\ldots [M/x_m][F_1/x_{m+1}]\ldots [F_n/x_{m+n}][G_1/y_{k+1}]\ldots [G_l/y_{k+l}].$$  
First note that by \eqref{eq6}, $H_b$ and $H_c$ are distinct constant functions, say $0$ and $1$, respectively. We then let $Y:= \{s \in X \mid H_a(s) = 0\}$ and $Z:= \{s\in X \mid H_a(s) = 1\}$ when $Y\cup Z = X$ and $Y \cap Z = \emptyset$. It suffices to show that $\M \models_{Y'} \theta_0\ja \chi_0$ and $\M \models_{Z'} \theta'_1\ja \chi'_1$ where
$$Y':=Y[M/x_1]\ldots [M/x_m][F_1/x_{m+1}]\ldots [F_n/x_{m+n}]$$  
and
$$Z':=Z[M/x_1]\ldots [M/x_m][G_1/y_{k+1}]\ldots [G_l/y_{k+l}].$$ 
For $\M \models_{Y'} \theta_0\ja \chi_0$, since
$$Y'= X'(a=0) \upharpoonright (\Dom(X) \cup\{x_1, \ldots ,x_{m+n}\})$$ 
and the variables $a,b,c,y_{k+1}, \ldots ,y_{k+l}$ do not appear in $\theta_0 \ja \chi_0$, it suffices to show by Lemma \ref{fakeloc}  that $\M \models_{X'(a=0)} \theta_0 \ja \chi_0$. For this, first note that by \eqref{eq6} and Theorem \ref{flatness}, for each $s\in X'(a=0)$ we have $s(a) \neq s(c)$ when it follows that $\M \models_s \theta_0$. Hence by Theorem \ref{flatness}, $\M \models_{X'(a=0)} \theta_0$. $\M \models_{X'(a=0)} \chi_0$ follows from \eqref{eq6} and Lemma \ref{rel.lemma}. Therefore $\M \models_{X'(a=0)} \theta_0 \ja \chi_0$ when $\M \models_{Y'} \theta_0\ja \chi_0$. Analogously we obtain $\M \models_{Z'} \chi^*_1 \ja \theta'_1$. This concludes the if-part of \eqref{eq4} and thus the conjunction case and the proof.
\end{itemize}
\end{proof}

\section{$\FO(\dep(\ldots), \bot_{\rm c}, \subseteq)(k\forall) $ as a fragment of  $\ESO$}
\subsection{A translation into $\ESO$}
In this section we will show that $\FO(\dep(\ldots), \bot_{\rm c}, \subseteq)(k\forall)  \leq \ESOfvar{(k+1)}$. Moreover, it  will be shown that $\FO(\dep(\ldots), \subseteq) (k\forall) \leq \ESOfvar{k}$. For the first one, we need the following lemma.
\begin{lem}\label{atomcollapse}
Let $\indep{\tuple x}{\tuple y v}{\tuple z}$ be an independence atom. Then for all models $\M$ and teams $X$,
\begin{equation}\label{contraction}
\M \models_X  \indep{\tuple x}{\tuple y v}{\tuple z} \Leftrightarrow \M \models_X  \indep{\tuple x v}{\tuple y }{\tuple z}  \ja   \indep{\tuple x}{v }{\tuple z} 
\end{equation}
\end{lem}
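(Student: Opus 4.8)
The plan is to prove the equivalence (\ref{contraction}) directly from the semantic rule for conditional independence atoms, unwinding both sides and checking the two implications separately. Throughout, fix a model $\M$ and a team $X$; I write $s \sim_{\tuple u} s'$ for $s(\tuple u) = s'(\tuple u)$.

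For the direction from left to right, assume $\M \models_X \indep{\tuple x}{\tuple y v}{\tuple z}$. To get the first conjunct $\indep{\tuple x v}{\tuple y}{\tuple z}$, take $s, s' \in X$ with $s \sim_{\tuple x v} s'$; in particular $s \sim_{\tuple x} s'$, so the hypothesis yields $s'' \in X$ with $s''(\tuple x \tuple y v \tuple z) = s(\tuple x \tuple y v) s'(\tuple z)$. Since $s''(\tuple x v) = s(\tuple x v) = s'(\tuple x v)$ and $s''(\tuple y) = s(\tuple y)$ and $s''(\tuple z) = s'(\tuple z)$, this $s''$ witnesses the required condition. For the second conjunct $\indep{\tuple x}{v}{\tuple z}$, take $s, s' \in X$ with $s \sim_{\tuple x} s'$; the hypothesis (applied to the same pair) again gives $s''$ with $s''(\tuple x \tuple y v \tuple z) = s(\tuple x \tuple y v) s'(\tuple z)$, and then $s''(\tuple x) = s(\tuple x)$, $s''(v) = s(v)$, $s''(\tuple z) = s'(\tuple z)$, as needed.

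For the converse, assume both atoms on the right hold, and take $s, s' \in X$ with $s \sim_{\tuple x} s'$. First apply $\indep{\tuple x}{v}{\tuple z}$ to the pair $s, s'$ to obtain $s_1 \in X$ with $s_1(\tuple x) = s(\tuple x)$, $s_1(v) = s(v)$, $s_1(\tuple z) = s'(\tuple z)$. Now $s$ and $s_1$ agree on $\tuple x v$ (here I use $s_1(v) = s(v)$), so applying $\indep{\tuple x v}{\tuple y}{\tuple z}$ to the pair $s, s_1$ gives $s_2 \in X$ with $s_2(\tuple x v \tuple y) = s(\tuple x v \tuple y)$ and $s_2(\tuple z) = s_1(\tuple z) = s'(\tuple z)$. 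Then $s_2(\tuple x \tuple y v \tuple z) = s(\tuple x \tuple y v) s'(\tuple z)$, so $s_2$ is the required witness for $\indep{\tuple x}{\tuple y v}{\tuple z}$.

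I do not expect any genuine obstacle here; the only point that needs a little care is the order in which the two right-hand atoms are applied in the converse direction — one must first produce a witness agreeing with $s'$ on $\tuple z$ while keeping the old value of $v$, and only then ``merge in'' the $\tuple y$-component — since applying them in the wrong order does not obviously chain. A remark worth adding is that, combined with the dependence-atom identity $\M \models_X \dep(\tuple u, v) \Leftrightarrow \M \models_X \indep{\tuple u}{v}{v}$ used in Lemma~\ref{rel.lemma}, this lemma lets one break a conditional independence atom with a long $\tuple y v$ block into atoms whose non-conditional tuples are shorter, which is exactly what is needed to control the number of distinct variables per atom in the arity-bounded fragments.
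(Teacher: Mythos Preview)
Your proof is correct and follows essentially the same argument as the paper: the left-to-right direction is handled directly (the paper simply calls it ``straightforward''), and for the converse you apply $\indep{\tuple x}{v}{\tuple z}$ first to produce an intermediate assignment agreeing with $s$ on $\tuple x v$ and with $s'$ on $\tuple z$, then chain with $\indep{\tuple x v}{\tuple y}{\tuple z}$ to recover the $\tuple y$-component---exactly as the paper does (with $s_0, s_1$ in place of your $s_1, s_2$).
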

\begin{proof}
The direction from left to right is straightforward. We prove the converse direction. For this, assume that $\M \models_X  \indep{\tuple x v}{\tuple y }{\tuple z}  \ja   \indep{\tuple x}{v}{\tuple z} $, and let $s,s'\in X $ be such that $s(\tuple x) = s'(\tuple x)$. Then by $\M \models_X  \indep{\tuple x}{v }{\tuple z} $ there is $s_0\in X$ such that $s_0(\tuple x v \tuple z)=s(\tuple x v)s'(\tuple z)$. Since $s(\tuple x v)=s_0(\tuple x v)$, by $\M \models_X  \indep{\tuple x v}{\tuple y }{\tuple z} $ there is $s_1 \in X$ such that $s_1(\tuple x v \tuple y \tuple z)=s(\tuple x v \tuple y)s_0(\tuple z) = s(\tuple x v \tuple y)s'(\tuple z)$.
\end{proof}
Interestingly, the right-to-left implication in \eqref{contraction} corresponds to the so-called Semi-Graphoid axiom \emph{Contraction} going back to  \cite{Dawid1979} where it is listed as one of the  elementary properties satisfied by (statistical) conditional independence.

\begin{prop}\label{t2} For any sentence $\psi\in \FO(\dep(\ldots ),\bot_{\rm c},\subseteq)(k\forall)$ there is a sentence $\tau_{\psi} \in \ESOfvar{(k+1)}$  such that, for all $\mA$:

\[ \mA\models \psi \Leftrightarrow \mA\models \tau_{\psi}. \]
Furthermore, for $\psi\in \FO(\dep(\ldots),\subseteq)(k\forall)$, it holds that  $\tau_{\psi} \in \ESO_f(k\forall)$. 
\end{prop}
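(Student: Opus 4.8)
# Proof proposal for Proposition \ref{t2}

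The plan is to combine the prenex normal form theorem (Theorem \ref{NNF}) with a direct Skolemization argument. By Theorem \ref{NNF}, it suffices to handle sentences $\psi$ already in the normal form $\forall x_1 \ldots \forall x_m \on x_{m+1} \ldots \on x_{m+n}(\chi \ja \theta)$ with $m \le k$, where $\chi$ is a conjunction of $\{\dep(\ldots),\bot_{\rm c},\subseteq\}$-atoms and $\theta$ is first-order. First I would apply Lemma \ref{atomcollapse} repeatedly to every independence atom in $\chi$ so that each conditional independence atom $\indep{\tuple x}{\tuple y}{\tuple z}$ is replaced by a conjunction of atoms of the form $\indep{\tuple u}{w}{\tuple z}$ with a single non-conditional left variable $w$; this normalization does not introduce new quantifiers, so the resulting sentence is still in $\FO(\dep(\ldots),\bot_{\rm c},\subseteq)(k\forall)$. (For the inclusion-only case this step is skipped.)

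Next I would Skolemize the existential quantifiers. The existentially quantified variables $x_{m+1},\ldots,x_{m+n}$ become function symbols $f_{m+1},\ldots,f_{m+n}$ depending on $x_1,\ldots,x_m$, and — crucially under strict semantics — the witness for a team assignment is a genuine \emph{function} of the universally quantified values, so Skolemization is faithful. This yields an $\ESO$ prefix $\exists f_{m+1}\ldots\exists f_{m+n}\,\forall x_1\ldots\forall x_m$. The body $\chi\ja\theta$ now has to be expressed as a quantifier-free first-order formula over the expanded vocabulary. The first-order part $\theta$ is immediate by Flatness (Theorem \ref{flatness}), since under the given assignments $\theta$ holds in the team iff it holds at every assignment, i.e.\ $\forall x_1\ldots\forall x_m\,\theta^*$ where $\theta^*$ substitutes the terms $f_j(\bar x)$. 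The dependence atoms $\dep(\tuple u,v)$ are handled the standard way: $\dep(\tuple u,v)$ holding in the team $X[M/\bar x][\bar f/\bar x]$ is equivalent to saying the value of $v$ (a term $t_v$) is functionally determined by the values of $\tuple u$ (terms $\tuple t_u$), which is expressible as $\forall \bar x\,\forall \bar x'(\tuple t_u(\bar x)=\tuple t_u(\bar x') \rightarrow t_v(\bar x)=t_v(\bar x'))$ — but this doubles the universal block. The right way, following the dependence-logic-to-$\ESO$ translation, is instead to introduce an additional Skolem function for the non-conditional variable and encode functional dependence via equations; since in the normal form all non-conditional variables of $\chi$ are existentially quantified (item 1 of the Lemma proving Theorem \ref{NNF}), each such variable already is $f_j(\bar x)$, and $\dep(\tuple u, f_j)$ just asserts $f_j$ factors through the tuple of functions/variables naming $\tuple u$; this is again a first-order condition over the team but needs care.

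The key point — and the one extra universal quantifier in the general case — is the inclusion and independence atoms. An inclusion atom $\tuple u\subseteq\tuple v$ over the team says: for every assignment $s$ there is an assignment $s'$ with $s(\tuple u)=s'(\tuple v)$. The team here is swept out by the universal variables $\bar x\in M^m$, so "for every $s$" is "$\forall \bar x$" and "there is $s'$" is "$\exists \bar x'$" — and this one existential block can be Skolemized by \emph{one} further function of $\bar x$ applied to the witnessing $\bar x'$-values, but producing the needed tuple $\tuple v$-values requires knowing $\bar x'$, and the cleanest uniform encoding uses a single fresh universal variable together with a fresh Skolem function, costing the $(k{+}1)$st universal quantifier. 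For a single non-conditional independence atom $\indep{\tuple u}{w}{\tuple z}$ (post-Lemma \ref{atomcollapse}) the analysis is the same: it asserts a back-and-forth condition $\forall s\,\forall s'\,\exists s''$, which Skolemizes with one extra function; merging the first-order matrix of all these conditions, plus $\theta$, plus the dependence conditions, gives an $\ESO$-matrix with $m+1\le k+1$ universal first-order quantifiers, hence $\tau_\psi\in\ESOfvar{(k+1)}$. For $\psi\in\FO(\dep(\ldots),\subseteq)(k\forall)$ one checks that inclusion atoms $\tuple u\subseteq\tuple v$ can in fact be encoded \emph{without} spending a new universal quantifier — using a Skolem function $g(\bar x)$ that directly names the witnessing $\bar x'$ and then reading off $\tuple v(g(\bar x))$ via composition — so no $(k{+}1)$st variable is needed and $\tau_\psi\in\ESO_f(k\forall)$; the asymmetry is exactly that inclusion atoms admit function witnesses while conditional independence atoms force a universally-quantified "test point".

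The main obstacle I anticipate is the bookkeeping of \emph{which} atoms force an extra universal quantifier and verifying that for the inclusion-only fragment one genuinely stays at $k$: one must show that the back-and-forth/witness condition for $\subseteq$ can be written as $\forall \bar x\,(\text{quantifier-free in }\bar x, \bar f, g)$ with no new universal variable, by composing the witness function $g$ into the terms, whereas for $\bot_{\rm c}$ the presence of \emph{two} free assignments $s,s'$ in the semantic clause is what blocks this. Making precise that Lemma \ref{atomcollapse} reduces the general independence atom to the single-non-conditional-variable case without cost, and that this single case still needs the extra quantifier, is the crux; everything else is the routine dependence-logic Skolemization together with Flatness.
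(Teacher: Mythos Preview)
Your overall architecture---normal form via Theorem~\ref{NNF}, then Skolemization of the existential block, then a case analysis on the atoms in $\chi$---is the same as the paper's, and the treatments of first-order literals, dependence atoms (via an auxiliary function $h$ with $\exists h\,\forall\bar x\, t_v(\bar x)=h(\tuple t_u(\bar x))$), and inclusion atoms (Skolemize the witnessing assignment $\bar x'$ by $m$ functions $g_1,\ldots,g_m$ of $\bar x$, keeping only $m\le k$ universals) are correct and match the paper.

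There is, however, a genuine gap in the independence case. You write that $\indep{\tuple u}{w}{\tuple z}$ ``asserts a back-and-forth condition $\forall s\,\forall s'\,\exists s''$, which Skolemizes with one extra function'' and yields $m+1$ universals. But $\forall s\,\forall s'$ means $\forall\bar x\,\forall\bar x'$, i.e.\ $2m$ universal first-order quantifiers, and Skolemizing $\exists s''$ does nothing to reduce that count. To get down to $m+1$ you need an additional idea that you have not supplied: one existentially quantifies a \emph{relation} $S_2$ intended to be the projection $X(\tuple u,w)$ of the team onto the condition and \emph{one} non-conditional side, asserts $S_2(\tuple t_u(\bar x),t_w(\bar x))$ for all $\bar x$, and then replaces the second assignment quantifier $\forall\bar x'$ by a single universal $\forall x'$ over the \emph{value} $t_w(\bar x')$, guarded by $S_2(\tuple t_u(\bar x),x')$. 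This is exactly the step that costs the $(k{+}1)$st universal, and it only works because $w$ is a \emph{single} variable. Relatedly, your application of Lemma~\ref{atomcollapse} only reduces the left-hand side of $\bot_{\rm c}$ to a single variable; you must also reduce the right-hand side (using the symmetry $\indep{\tuple x}{\tuple y}{\tuple z}\Leftrightarrow\indep{\tuple x}{\tuple z}{\tuple y}$ and applying the lemma again), since the single extra universal $x'$ can only stand in for one variable's value. Without both of these points, your argument does not establish the bound $k+1$; it only gives $2k$.
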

\begin{proof} By Lemma \ref{atomcollapse} we may assume that each independence atom that appears in $\psi$ is of the form $\indep{\tuple z}{u}{w}$. We may also assume that $\psi$ is in $\forall\exists$-form:
\begin{equation}\label{blah}
\forall x_1\ldots \forall x_k\exists y_1\ldots \exists y_m\chi ,    
\end{equation}
where $\chi$ is a quantifier-free formula. For inclusion logic sentences this normal form can be assumed if $\chi$ is allowed
to contain also dependence atoms.
The idea is that a team $X$ that arises by evaluating the quantifier prefix of $\psi $ can be represented by a  $k$-ary relation $S$ and functions $f_j$ encoding the values the variables $y_j$ has in the team $X$. In particular, the  variables $y_i$ will be translated by terms $f_i(x_1,\ldots,x_k)$. We will first show how to construct from the quantifier-free formula $\chi\in \FO(\dep(\ldots ),\bot_{\rm c},\subseteq)$ a $\ESOfvar{(k+1)}$ formula $\tau_{\chi}$ such that for all models $ \M$ and teams $X$,
$$\M \models_X \chi \Leftrightarrow (\M,S,\tuple f) \models \tau_{\chi}$$
where $S$ and $\vec{f}$ encode $X$ as above. The translation is defined as follows.
\begin{enumerate}

\item Let $\psi$ be a first-order atomic or negated atomic formula $\a(\tuple u)$. The formula $\tau_{\psi}(S,\vec{f})$ is now defined as \[ \forall x_1\ldots \forall x_k ( S(x_1,\ldots,x_k)\rightarrow \a(\tuple w)) \]
where $\tuple{w}$ arises from $\vec{u}$ by replacing the variables $y_j$ by $f_j(x_1,\ldots,x_k)$.







\item Let $\psi$ be of the form $\dep(z_1,\ldots,z_m)$. The formula $\tau_{\psi}(S,\vec{f})$ is now  defined as
\begin{eqnarray*}
&&\exists f \forall x_1\ldots\forall x_k (S(x_1,\ldots,x_k) \rightarrow z_m=f(t_1,...,t_{m-1}))
\end{eqnarray*}
\noindent where $t_s=x_j$ if $z_s=x_j$,  and $t_s=f_j(x_1,\ldots,x_k)$ if $z_s=y_j$. 

\item Let $\psi$ be of the form $\vec{z}\subseteq \vec{w}$. The formula $\tau_{\psi}(S,\vec{f})$ is now  defined as
\[ \forall x_1\ldots \forall x_k \exists x_{k+1}\ldots \exists x_{2k}(S(x_1,\ldots,x_k) \rightarrow ( S(x_{k+1},\ldots,x_{2k})\wedge \vec{t}= \vec{s})) \]
where $\vec{t}$ arises from $\vec{z}$ by replacing the variables $y_j$ by $f_j(x_1,\ldots,x_k)$, and 
  $\vec{s}$ arises from $\vec{w}$ by replacing the variables $x_i$ by $x_{k+i}$ and $y_j$ by $f_j(x_{k+1}\ldots,x_{2k})$.  

\item Let $\psi$ be of the form $z\bot_{\vec{u}} w$. The formula $\tau_{\psi}(S,\vec{f})$ is now  defined as
\begin{eqnarray*}
&& \on S_1 \on S_2 \forall x_1\ldots \forall x_k \forall x'\exists x_{k+1}\ldots\exists x_{2k}\Big (S(x_1,\ldots,x_k)\rightarrow \big (S_1(\vec{u} z) \ja S_2(\vec{u} w) \wedge (S_2(\vec{u} x') \rightarrow  \\
&& (S(x_{k+1},...,x_{2k})\wedge \vec{u}_1=\vec{u}_2\wedge
z_1=z_2\wedge x'= w_2)\big ) \Big )
\end{eqnarray*}
where $\vec{u}_1$ and $z_1$ are defined by replacing the variables $y_j$ by $f_j(x_1,\ldots,x_k)$. In $\vec{u}_2$,  $w_2$ and $z_2$ the variables $x_j$ are  replaced by $x_{k+j}$ and the variables $y_j$ by   $f_j(x_{k+1},\ldots,x_{2k})$.

\item Let $\psi$ be of the form $\chi\vee \theta$. Now $\tau_{\psi}(S,\vec{f})$ is defined as
\begin{eqnarray*}
&& \exists S_1\exists S_2 \Big(\tau_{\chi}(S_1,\vec{f})\wedge\tau_{\theta}(S_2,\vec{f})\wedge \forall x_1\ldots\forall x_k \big (S(x_1,\ldots,x_k)\rightarrow  \\ &&  [S_1(x_1,\ldots,x_k)\vee S_2(x_1,\ldots,x_k)\wedge  \neg (S_1(x_1,\ldots,x_k)\wedge S_2(x_1,\ldots,x_k))] \big )\Big ).
\end{eqnarray*}

\item Let $\psi$ be of the form $\chi\wedge \theta$. Now $\tau_{\psi}(S,\vec{f})$ is defined as $\tau_{\chi}(S,\vec{f})\wedge \tau_{\theta}(S,\vec{f})$.


\end{enumerate}
We have given a compositional translation for a quantifier-free $\FO(\atoms)$-formula $\chi$.
An easy induction shows that this formula  can be transformed to the form
\[ \exists f_1\ldots\exists f_m \forall x_1\ldots \forall x_{k} \forall x' \theta,     \]
where $\theta$ is a quantifier-free formula and where the functions $f_i$ may have arity greater than $k$.
The sentence $\psi$ in \eqref{blah} is now equivalent to a $\ESOfvar{(k+1)}$ formula $\tau_{\psi}$ of the form
\[ \exists S\exists f_1\ldots f_{m+n}\forall x_1\ldots \forall x_k \forall x' \big (S(x_1,\ldots,x_k)\wedge \theta  \big )  \]

\end{proof}
\subsection{Capturing $\NTIME_{\RAM}(n^k)$ with $\indincforall{k}$}
Let us first note that the following theorem follows immediately from $\ESOfvar{k} \leq \dforall{2k}$ \cite{durand11} and the fact that a dependence atom $\dep(\tuple x ,y)$ can be expressed as the independence atom $\indep{\tuple x}{y}{y}$.
\begin{thm}\label{leqind}
$\ESOfvar{k} \leq \indforall{2k}$.
\end{thm}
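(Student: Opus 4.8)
The plan is to derive the statement directly from the dependence-logic analogue together with an atom-level translation. First I would invoke the inclusion $\ESOfvar{k} \le \dforall{2k}$ of \cite{durand11}: given a sentence $\psi \in \ESOfvar{k}$, this produces a sentence $\psi_{\dep} \in \dforall{2k}$ with $\mA \models \psi \Leftrightarrow \mA \models \psi_{\dep}$ for every structure $\mA$. Since dependence logic formulas are downward closed (Proposition \ref{thm:dc}), the strict and lax semantics coincide on $\psi_{\dep}$, so this equivalence is unambiguous and in particular holds under the strict reading.

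Next I would replace in $\psi_{\dep}$ every dependence atom $\dep(\tuple u, v)$ by the conditional independence atom $\indep{\tuple u}{v}{v}$, obtaining a sentence $\psi_{\bot} \in \indlogic$. As already observed in the proof of Lemma \ref{rel.lemma}, $\M \models_X \dep(\tuple u, v) \Leftrightarrow \M \models_X \indep{\tuple u}{v}{v}$ holds in every team $X$, and under both semantics, since atoms are interpreted identically. Because the satisfaction rules for the connectives and quantifiers refer only to the satisfaction of immediate subformulas in the induced teams, substituting a subformula by a team-equivalent one preserves logical equivalence; hence $\psi_{\bot} \equiv \psi_{\dep}$ under the strict semantics, so $\mA \models \psi \Leftrightarrow \mA \models \psi_{\bot}$.

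It then remains to check that $\psi_{\bot} \in \indforall{2k}$. The substitution introduces no quantifiers and uses no new variables — the atom $\indep{\tuple u}{v}{v}$ merely repeats $v$ — so $\psi_{\bot}$ inherits from $\psi_{\dep}$ the syntactic constraints of Definition \ref{fragments}(4): every variable quantified exactly once, at most $2k$ universal quantifiers (renaming bound variables beforehand if necessary, which does not affect the count of universal quantifiers). This would complete the argument. There is no genuine obstacle here: the mathematical content lies in \cite{durand11}, and the only points requiring care are that the chain of equivalences is valid under the strict semantics — guaranteed by downward closure of dependence logic for the first step and by compositional substitution of a team-equivalent atom for the second — and that the restriction on the number of universal quantifiers survives the atom replacement.
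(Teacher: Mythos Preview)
Your proposal is correct and follows exactly the same line as the paper: invoke $\ESOfvar{k}\le\dforall{2k}$ from \cite{durand11} and then replace each dependence atom $\dep(\tuple u,v)$ by the team-equivalent independence atom $\indep{\tuple u}{v}{v}$. Your additional remarks about downward closure and compositional substitution simply spell out why the step is sound under the strict semantics, but the argument is the paper's.
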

Next we will consider inclusion logic.
\begin{thm}\label{leqinc}
$\ESOfvar{k} \leq \incforall{k}$.
\end{thm}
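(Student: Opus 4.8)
The plan is to take an arbitrary sentence of $\ESOfvar{k}$, which by definition has the form $\exists f_1 \ldots \exists f_n \forall x_1 \ldots \forall x_r \psi$ with $r \le k$ and $\psi$ quantifier-free, and produce an equivalent sentence of $\incforall{k}$, i.e.\ an inclusion logic sentence using at most $k$ universal quantifiers (each variable quantified once). First I would simplify the second-order part: by standard tricks one may replace the function quantifiers $\exists f_i$ by a single relation quantifier or, better, carry the functions directly in team semantics. The key idea is that under the \emph{strict} semantics an existential quantifier $\exists y$ applied to a team $X$ picks out a \emph{choice function} $F \colon X \to M$, so that first-order existential quantification in $\incforall{k}$ already mimics Skolem functions — this is exactly why strict semantics, not lax, is the right setting. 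Thus after quantifying $\forall x_1 \ldots \forall x_r$ (which in team semantics produces the full team $X = \{\emptyset\}[M/x_1]\cdots[M/x_r]$, i.e.\ all $r$-tuples) and then existentially quantifying auxiliary variables $\exists y_1 \ldots \exists y_n$ to guess the values $f_i(x_1,\ldots,x_r)$, one obtains a team that encodes the pair of the universally indexed tuple and the Skolem values.

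The heart of the argument is to express, with inclusion atoms and first-order matrix but \emph{without introducing new universal quantifiers}, two things: (a) the guessed values $y_i$ genuinely come from a function of $(x_1,\ldots,x_r)$, i.e.\ a dependence atom $\dep(x_1,\ldots,x_r,y_i)$, and (b) the quantifier-free matrix $\psi$ holds of every assignment in the team. For (b), flatness of first-order formulas (Theorem \ref{flatness}) means we can simply write $\psi$ as a first-order subformula and it will be checked pointwise across the whole team — so no universal quantifier is needed there. For (a), the standard move (cf.\ the treatment of inclusion logic capturing $\ESO$ in \cite{galhankon13}) is to simulate the dependence atom using inclusion atoms of arity $\le k$: one introduces a fresh value via existential quantification and uses inclusion atoms $\vec z\,y \subseteq \vec z\,y'$ (both sides of length $\le r+1$, which must be kept $\le k$) together with a disjunction to force functional behaviour. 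Since $r \le k$, the arity bound $|\vec a| = |\vec b| \le k$ on inclusion atoms is respected precisely when one is careful to reuse the already-quantified $x_i$'s rather than padding. One then combines these pieces by conjunction, obtaining a sentence whose only universal quantifiers are $\forall x_1 \ldots \forall x_r$ with $r \le k$, hence a member of $\incforall{k}$.

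I expect the main obstacle to be the \textbf{arity bookkeeping for the inclusion-atom simulation of functional dependence under the strict semantics}: the naive encoding of $\dep(\vec x, y)$ via inclusion atoms can blow up the arity past $k$, and one must also verify that the strict (disjoint-union) disjunction does not interfere — as the failure-of-locality example in Section~3 warns, strict disjunction is delicate. The clean way around this is to first push the target $\ESOfvar{k}$ sentence through a normal form where the matrix is a conjunction of a ``database'' part and a first-order part, mirroring the structure of Theorem \ref{NNF}, and to simulate each needed dependence atom by a self-contained inclusion-logic gadget of arity $\le k$ that is applied to the full universal team (where it behaves well because the team is ``cylindrical'' in $x_1,\ldots,x_r$). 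A secondary point to check is that the empty-team and the sentence-level $\{\emptyset\}$ conventions line up, i.e.\ that $\mA \models \psi$ in $\ESOfvar{k}$ iff the constructed inclusion sentence is satisfied by $\{\emptyset\}$; this is routine given the semantic clauses for $\forall$ and $\exists$. Combined with Proposition \ref{t2} (which gives the reverse inclusion $\incforall{k} \le \ESOfvar{k}$ via $\FO(\dep(\ldots),\subseteq)(k\forall) \le \ESO_f(k\forall)$) and Grandjean's theorem $\ESOfvar{k} = \NTIME_{\RAM}(n^k)$ \cite{grandjean04}, this yields the full equality $\incforall{k} = \ESOfvar{k} = \NTIME_{\RAM}(n^k)$ and hence the strict hierarchy.
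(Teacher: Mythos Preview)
Your plan has the right opening move but misidentifies where the real work lies, and this leaves a genuine gap.

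First, your point (a) is a non-issue. Under strict semantics, after $\forall x_1\ldots\forall x_r$ the team is the full product $\{\emptyset\}[M/x_1]\cdots[M/x_r]$, so when you then quantify $\exists y_i$ the witness function $F_i$ has domain $M^r$ and \emph{is} already a function of $(x_1,\ldots,x_r)$. No dependence atom $\dep(x_1,\ldots,x_r,y_i)$ needs to be simulated at all; you say this yourself in your first paragraph and then contradict it in the second. So the ``arity bookkeeping for the inclusion-atom simulation of functional dependence'' you flag as the main obstacle is not an obstacle.

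The actual gap is something you do not mention: the quantifier-free matrix $\psi$ may contain \emph{nested} function terms, e.g.\ $f_i(f_{l_1}(\vec x),\ldots,f_{l_k}(\vec x))$, not just $f_i(x_1,\ldots,x_r)$. Guessing $y_i = f_i(x_1,\ldots,x_r)$ for each assignment gives you the value of $f_i$ only at the universally quantified tuple; it does not give you $f_i$ evaluated at the tuple $(y_{l_1},\ldots,y_{l_k})$. This is precisely where inclusion atoms enter in the paper's proof: for each composed term one introduces a fresh existentially quantified variable $z_{ij}$ and enforces the ``table lookup''
\[
y_{l_1}\ldots y_{l_k}\, z_{ij} \;\subseteq\; x_1\ldots x_k\, y_i,
\]
which says that $z_{ij}$ agrees with the value $y_i$ takes on the row where $(x_1,\ldots,x_k)=(y_{l_1},\ldots,y_{l_k})$. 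To make this translation go through with inclusion atoms of width $k{+}1$ and exactly $k$ universal quantifiers, one must first invoke two normal forms you omit: Grandjean's result $\ESOfvar{k}=\ESO_f(k\forall,k\text{-ary})$ to force every $f_i$ to have arity exactly $k$, and Proposition~3.6 of \cite{durand11} to restrict term nesting to depth two with the structure above (and to guarantee each $f_i$ occurs at least once as $f_i(x_1,\ldots,x_k)$, so that the lookup row exists). Without these preprocessing steps your translation cannot handle general $\psi$, and without the lookup inclusion atoms your translated sentence is simply not equivalent to $\phi$ whenever $\psi$ contains a composed term.
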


\begin{proof}
Assume that $\phi$ is a sentence of the form
$$\on f_1 \ldots \on f_n \forall x_1 \ldots \forall x_k \psi$$
where $\psi$ is quantifier-free and first-order. Let $1 \leq i \leq n$. Since $\ESOfvar{k} = \ESO_f (k\forall,k\textrm{-ary})$ \cite{grandjean04}, we may assume that $f_i$ is of arity $k$. By the normal form given in Proposition 3.6. in \cite{durand11}, we may assume that each occurrence of $f_i$ in $\psi$ is of the form
\begin{itemize}
\item  $f_i(x_1, \ldots ,x_k)$ or
\item $f_i(f_{l_1}(\vec{x}), \ldots ,f_{l_k}(\vec{x}))$ where $f,f_{l_1}, \ldots ,f_{l_k}$ are pairwise distinct existentially quantified funtion symbols.
\end{itemize}
Also by Proposition 3.6. we may assume that $f_i$ cannot appear both as an inner and an outer function symbol even in different composed terms, and $f_i$ has at least one occurrence of the form $f_i(x_1, \ldots ,x_k)$ in $\psi$. We will now translate $\phi$ to a sentence in $\incforall{k}$.

For $1 \leq i \leq n$, let $c_i$ be the number of composed terms that appear in $\psi$ and where $f_i$ is the outermost function symbol. Then the occurences of $f_i$ in $\psi$ are of the form
$$t_{i}:=f_i(x_1, \ldots ,x_k),$$
and
$$u_{ij} := f_i(t_{l_{1}}, \ldots ,t_{l_{k}}),\textrm{ for } j=1, \ldots ,c_i,$$
where,  for $1 \leq m \leq k$,  $1 \leq l_m \leq n$ and $c_{l_m}=0$. Note that the value of $l_1, \ldots ,l_k$ depends also on the value of $i$ and $j$. This is not written down in the notation since the value of $i$ and $j$ will be always clear from the context.

We now define $\phi^* \in \incforall{k}$ as

\begin{align*}&\forall x_1 \ldots \forall x_k \on (y_{i})_{1\leq i \leq n}\on (z_{ij})_{\substack{1\leq i \leq n\\1 \leq j \leq c_i}}\big (\psi^* \ja \bigwedge_{\substack{1 \leq i \leq n\\1 \leq j \leq c_i}}  y_{l_{1}} \ldots y_{l_{k}} z_{ij} \subseteq x_1 \ldots x_k y_i \big )
\end{align*}

where $\psi^*$ is obtained from $\psi$ by replacing each term $t_i$ (or $u_{ij}$) with variable $y_i$ (or $z_{ij}$). Let us show that the equivalence holds.



\textit{Only-if part}. Assume that $\M \models \phi$. Then there is $\M^* := (\M, f_1^{\M^*}, \ldots ,f_n^{\M^*})$ such that 
$$\M^* \models \forall x_1 \ldots \forall x_n \psi.$$
Then by Theorem \ref{flatness}
$$\M^* \models_X\psi$$
where $X:= \{\emptyset\}[M/x_1]\ldots [M/x_k]$. Let us extend each $s \in X$ with $y_i \mapsto t_i^{\M^*}\left < s \right >$ and $z_{ij} \mapsto u_{ij}^{\M^*}\left < s\right >$,
for $1 \leq i \leq n$ and $1 \leq j \leq c_i$. Let $X'$ consist of these extended assignments. Clearly $\M \models_{X'} \psi^*$. For 
$$\M \models_{X'}  \bigwedge_{\substack{1 \leq i \leq n\\1 \leq j \leq c_i}}  y_{l_{1}} \ldots y_{l_{k}} z_{ij} \subseteq x_1 \ldots x_k y_i  ,$$
let $1 \leq i \leq n$, $1 \leq j\leq c_i$ and $s \in X'$. We show that there is $s'\in X'$ such that
$$s(y_{l_{1}}, \ldots ,y_{l_{k}} ,z_{ij}) =s'( x_1, \ldots ,x_k, y_i ).$$
We let $s' \in X'$ be such that
$$s(y_{l_{1}}, \ldots ,y_{l_{k}})=s'( x_1, \ldots ,x_k).$$
Note that
$$s(y_{l_{1}}, \ldots ,y_{l_{k}})=(t_{l_{1}}^{\M^*}\left < s\right >, \ldots ,t_{l_{k}}^{\M^*}\left < s\right >)$$
by the construction. Then
\begin{align*}
s(z_{ij})& =u_{ij}^{\M^*}\left < s \right >= f_i^{\M^*} (t_{l_{1}}^{\M^*}\left <s\right >, \ldots ,t_{l_{k}}^{\M^*}\left <s\right > )\\
& = f_i^{\M^*} (s'(x_1), \ldots ,s'(x_k))= t_i^{\M^*}\left <s'\right> =s'(y_{i} )
\end{align*}
which shows the claim and concludes the \textit{only-if part}.

\textit{If-part}. Assume that $\M \models \phi^*$. Then there are functions $F_i,G_{ij}: \{\emptyset\}[M/x_1]\ldots [M/x_k] \rightarrow M$, for $1 \leq i \leq n$ and $1 \leq j \leq c_i$, such that $\M \models_X \psi^*$ where
$$X:=\{\emptyset\}[M/x_1]\ldots [M/x_k][F_i/y_i]_{1\leq i \leq n}[G_{ij}/z_{ij}]_{\substack{1\leq i\leq n \\2 \leq j \leq c_i}}.$$ 
Let $\M^*:=(\M, f_1^{\M^*}, \ldots ,f_n^{\M^*})$ be such that, for $1 \leq i \leq n$ and $s \in X$,
\begin{equation}\label{pla}
f_i^{\M^*} (s(x_1), \ldots ,s(x_k)) = s(y_i).
\end{equation}
Because of strict semantics, $f^{\M^*}$ is well defined. We will show that 
$$\M^* \models \forall x_1 \ldots \forall x_k \psi.$$
For this, let $s \in X$ when $\M \models_s \psi^*$. We will show that $\M^* \models_{s'} \psi$ where $s':=s\upharpoonright \{x_1, \ldots ,x_k\}$. For this, it suffices to show that
\begin{enumerate}
\item\label{yks} $t_i^{\M^*}\langle s' \rangle = s(y_i)$, for $1 \leq i \leq n$,
\item\label{kaks} $u_{ij}^{\M^*} \left< s' \right > = s(z_{ij})$, for $1 \leq i \leq n$ and $1 \leq j \leq c_i$.
\end{enumerate}
Item \ref{yks} is the definition stated in \eqref{pla}. For item \ref{kaks}, let $1 \leq i \leq n$ and $1 \leq j \leq c_i$. Then $u_{ij}$ is $f_i(t_{l_{1}}, \ldots ,t_{l_{k}})$ and
\begin{equation}\label{tästä}u_{ij}^{\M^*}\langle s'\rangle=f_i^{\M^*}(t_{l_{1}}^{\M^*}\left <s'\right >, \ldots ,t_{l_{k}}^{\M^*}\left <s'\right >)=s''(y_i),
\end{equation}
for a $s''\in X$ such that
$$s''(x_1, \ldots ,x_k)=(t_{l_{1}}^{\M^*}\left <s'\right >, \ldots ,t_{l_{k}}^{\M^*}\left <s'\right >).$$
By item \ref{yks}, we obtain that 
$$(t_{l_{1}}^{\M^*}\left <s'\right >, \ldots ,t_{l_{k}}^{\M^*}\left <s'\right >)=  s(y_{l_{1}}, \ldots ,y_{l_{k}}  )$$
when
$$s''(x_1, \ldots ,x_k)=s(y_{l_{1}}, \ldots ,y_{l_{k}}  ).$$
By the assumption we know that
\begin{equation}\label{tälle}
\M \models_X y_{l_{1}} \ldots y_{l_{k}} z_{ij} \subseteq x_1 \ldots x_k y_i.
\end{equation}
Since $s''$ is the only possible witness for \eqref{tälle} with respect to $s$, we conclude that $s''(y_i)=s(z_{ij})$. From this and \eqref{tästä} the equality in item \ref{kaks} follows. This concludes the \textit{if-part} and thus the proof.

\end{proof}
By \cite{cook72}, $\ESOfvar{k} < \ESOfvar{(k+1)}$ follows for any vocabulary (see also Corollary 2.21 in \cite{durand11}). Hence, and by Theorems \ref{t2}, \ref{leqind} and \ref{leqinc}, we obtain the following expressivity hierarchies.
\begin{cor}
For any vocabulary, the following inequalities hold.
\begin{itemize}
\item $\incforall{k} \leq \ESOfvar{k} < \ESOfvar{(k+1)} \leq \incforall{(k+1)}$,
\item $\indforall{k} \leq \ESOfvar{(k+1)} < \ESOfvar{(k+2)} \leq \indforall{(2k+4)}$.
\end{itemize}
\end{cor}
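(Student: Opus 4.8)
The plan is to assemble the corollary mechanically from the three translations proved in this section together with the strict hierarchy $\ESOfvar{k}<\ESOfvar{(k+1)}$ of \cite{cook72}; no genuinely new argument is needed, so what follows is really just a bookkeeping check of which result is applied at which parameter.

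First I would handle the inclusion-logic chain. The key observation is that $\incforall{k}$ is, as a syntactic fragment, contained in $\FO(\dep(\ldots),\subseteq)(k\forall)$ --- a sentence built from inclusion atoms alone is in particular a sentence built from inclusion and dependence atoms with the same quantifier count --- so the ``furthermore'' clause of Proposition~\ref{t2} applies verbatim and gives $\incforall{k}\le\ESOfvar{k}$. The middle strict inequality $\ESOfvar{k}<\ESOfvar{(k+1)}$ holds over every vocabulary by \cite{cook72} (equivalently, $\NTIME_{\RAM}(n^k)<\NTIME_{\RAM}(n^{k+1})$ combined with $\ESOfvar{k}=\NTIME_{\RAM}(n^k)$ from \cite{grandjean04}). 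The last link is Theorem~\ref{leqinc} read with $k+1$ in place of $k$, which gives $\ESOfvar{(k+1)}\le\incforall{(k+1)}$. Concatenating the three inequalities yields the first bullet, and in particular the proper inclusion $\incforall{k}<\incforall{(k+1)}$.

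The independence-logic chain is argued in exactly the same shape. Since $\indforall{k}=\FO(\bot_{\rm c})(k\forall)$ is contained in $\FO(\dep(\ldots),\bot_{\rm c},\subseteq)(k\forall)$, the main statement of Proposition~\ref{t2} gives $\indforall{k}\le\ESOfvar{(k+1)}$; then $\ESOfvar{(k+1)}<\ESOfvar{(k+2)}$ by \cite{cook72}; and Theorem~\ref{leqind} applied at parameter $k+2$ gives $\ESOfvar{(k+2)}\le\indforall{2(k+2)}=\indforall{(2k+4)}$. Chaining these yields the second bullet. The only place calling for any care at all is lining up the parameters: one must apply Theorem~\ref{leqinc} (resp.\ Theorem~\ref{leqind}) one level above the fragment that Proposition~\ref{t2} lands inside, so that the ``$+1$'' lost in the translation into $\ESO$ is recovered and the strict Cook separation is not swallowed but genuinely survives being sandwiched between the two logics; and one must check that the doubling in Theorem~\ref{leqind} at argument $k+2$ is precisely $2k+4$.
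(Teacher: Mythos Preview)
Your proposal is correct and follows exactly the paper's own approach: the corollary is obtained by combining Proposition~\ref{t2} (including its ``furthermore'' clause for the inclusion case), Theorems~\ref{leqind} and~\ref{leqinc} at the appropriate parameters, and the strict hierarchy $\ESOfvar{k}<\ESOfvar{(k+1)}$ from \cite{cook72}. Your parameter bookkeeping---in particular applying Theorem~\ref{leqind} at $k+2$ to get $2k+4$---matches what the paper intends.
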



\section{Conclusion}
In this article we have studied the expressive power of fragments of inclusion and independence logic under the strict semantics. Our main result gives an exact characterization of the expressive power of the fragments $\incforall{k}$. On the other hand, determining the exact relationship between $\indforall{k}$and $\ESOfvar{k}$ remains an open problem.
\section*{Acknowledgement}
The authors would like to express their gratitude to Arnaud Durand for valuable comments and ideas. The authors were supported by grant 264917 of the Academy of Finland.

\bibliography{biblio}
\bibliographystyle{plain}
\end{document}